\DeclareMathOperator\Cov{Cov}
\DeclareMathOperator\Corr{Corr}
\DeclareMathOperator\Dimh{Dim_{_{\rm H}}}
\renewcommand{\P}{\mathds{P}}
\newcommand{\E}{\mathds{E}}
\newcommand{\R}{\mathds{R}}
\newcommand{\Z}{\mathds{Z}}
\newcommand{\cS}{\mathcal{S}}
\newcommand{\cV}{\mathcal{V}}
\newcommand{\cP}{\mathcal{P}}
\renewcommand{\d}{\mathrm{d}}
\newcommand{\cab}{\bm{c}_z}
\newcommand{\e}{\mathrm{e}}
\newcommand{\ul}{u^{(\ell)}}
\newtheorem{stat}{Statement}[section]
\newtheorem{proposition}[stat]{Proposition}
\newtheorem{theorem}[stat]{Theorem}
\newtheorem{lemma}[stat]{Lemma}
\theoremstyle{definition}
\newtheorem{definition}[stat]{Definition}
\newtheorem{remark}[stat]{Remark}
\numberwithin{equation}{section}
\begin{document}

\title{On the large-scale structure of the tall peaks for stochastic heat equations with fractional Laplacian %
	\thanks{Research supported by the NSF Grant No. 0932078000 while the author was in residence at the Mathematical Sciences Research Institute in Berkeley, California, during the Fall 2015 semester.}
}
\author{
	Kunwoo Kim
}

\date{\today}

\maketitle

\begin{abstract}
We consider stochastic heat equations with fractional Laplacian on $\R^d$.  Here,  the driving noise is generalized Gaussian  which is white in time but spatially homogenous and the spatial covariance is given by the Riesz kernels. We study the large-scale structure of the tall peaks for (i) the linear stochastic heat equation and (ii) the parabolic Anderson model.   We obtain the largest order of the tall peaks  and  compute the macroscopic Hausdorff dimensions of the tall peaks for both (i) and (ii). These results imply that both (i) and (ii) exhibit multi-fractal behavior in a macroscopic scale even though (i) is not intermittent and  (ii) is intermittent. This is an extension of  a recent result  of \cite{KKX} to a wider class of stochastic heat equations.\\
 
\noindent{\it Keywords:} Stochastic heat equations, fractional Laplacian, Riesz kernels,  macroscopic Hausdorff dimension\\
	
	\noindent{\it \noindent AMS 2010 subject classification:}
	Primary. 60H15; Secondary. 35R60, 60K37.
\end{abstract}

\section{Introduction and main results}
In \cite{KKX}, Khoshenvisan et al  consider, among many other things, the following stochastic heat equation
\begin{equation}\label{eq:SHE}
 \frac{\partial }{\partial t} u_t(x) = \frac{\partial^2}{\partial x^2} u_t(x) + \sigma(u_t(x)) \eta, \quad t>0, \, x\in \R,
\end{equation}subject to the initial function $u_0(x)$. Here, $\eta$ is space-time white noise, i.e. its covariance is given by
\begin{equation*}
\Cov(\eta_t(x),\eta_s(y))=\delta(|t-s|) \delta(|x-y|).
\end{equation*}
In particular, they consider two cases: (i) the \emph{linear stochastic heat equation}, i.e., $\sigma(u)=1$ in \eqref{eq:SHE} (we denote the solution to the linear stochastic heat equation as $Z_t(x)$), and (ii) the \emph{parabolic Anderson model}, i.e., $\sigma(u)=u$ in \eqref{eq:SHE} (we denote the solution to the parabolic Anderson model as $u_t(x)$).

 One of the interesting properties for the parabolic Anderson model is that the solution $u_t(x)$ is \emph{intermittent}, i.e.,   $\E|u_t(x)|^k \approx \exp\{\gamma(k)t\}$ as $t\rightarrow \infty$ where $\gamma(k)/k>0$ is strictly increasing for $k\geq 1$ (see \cite{CM} and \cite{ZRS}). Intuitively, intermittency means that the solution develops many different sizes of high peaks on small regions of different scales. Thus, \emph{intermittency} and \emph{multi-fractality} are often regarded as the same property. On the other hand, the linear stochastic heat equation is a Gaussian random field, which implies that it is \emph{not} intermittent. However, one very interesting result from \cite{KKX} is  that even the linear stochastic heat equation which is not intermittent can also be \emph{multi-fractal} in a \emph{macroscopic} scale as  the parabolic Anderson model is. More precisely, they consider the following sets: 
\begin{equation}\label{sets}
\begin{aligned}
& \cP_{Z_t}(\gamma):=\left\{ x\in\R^d: \, \|x\|>\e, \, Z_t(x)\geq  (t/\pi)^{1/4}\sqrt{2\gamma \log x} \right\},\\
 &\cP_{u_t}(\gamma):=\left\{ x\in\R^d: \, \|x\| >\e, \, \log u_t(x)\geq \gamma t^{1/3}(\log x)^{2/3} \right\}.
 \end{aligned}
 \end{equation} Note that $\log u_t(x)$ is the so-called Hopf-Cole solution to the KPZ equation of the statistical mechanics (\cite{KPZ}). Here, the sets in \eqref{sets} can be considered  as the sets of the tall peaks for the linear stochastic heat equation and the KPZ equation where the largest orders of the tall peaks for fixed $t>0$ are given by the functions $\sqrt{\log \|x\|}$ and $(\log x)^{2/3}$ respectively as $x\to \infty$ (those functions are called \emph{gauge} functions). Here, $\gamma$ can be regarded as a \emph{scale} parameter which scales the heights of the tall peaks. Khoshnevisan et al in \cite{KKX} show that  the {macroscopic} Hausdorff dimensions (introduced by Barlow and Taylor in \cite{BarlowTaylor1, BarlowTaylor} and denoted as $\Dimh$) of the sets in \eqref{sets} are 
 \[ \Dimh\left(\mathcal{P}_{Z_t}(\gamma) \right)=1-\gamma \quad \text{and} \quad \Dimh\left(\cP_{u_t}(\gamma) \right)= 1-\frac{4\sqrt{2}}{3}\gamma^{3/2}.\]
 This means that for infinitely many different $\gamma$'s, we have all different macroscopic Hausdorff dimensions, which says that both the linear stochastic heat equation and the parabolic Anderson model are \emph{multi-fractal} (see \cite[Definition 1.1]{KKX}). 
 
  The main objective of this paper is to extend the result from \cite{KKX} mentioned in the above paragraph to a wider class of stochastic heat equations. In other words, we consider the following linear stochastic heat equation with fractional Laplacian 
  
 \begin{equation}\label{linearFSHE}
\left[ \begin{split}
 \frac{\partial }{\partial t} Z_t(x) &= -(-\Delta)^{\alpha/2} Z_t(x) + \dot{F}_t(x), \quad t>0, \, x\in \R^d,\\
 Z_0(x)&=0, \quad x\in\R^d,
\end{split}\right.
\end{equation}
and the  parabolic Anderson model for fractional Laplacian 
\begin{equation}\label{FSHE}
 \left[\begin{split}
 \frac{\partial }{\partial t} u_t(x) &= -(-\Delta)^{\alpha/2} u_t(x) + u_t(x)\dot{F}_t(x), \quad t>0, \, x\in \R^d,\\
 u_0(x)&=1, \quad x\in\R^d.
\end{split}\right.
\end{equation}
Here, $-(-\Delta)^{\alpha/2}$ for $\alpha \in (0,2]$ is the so-called  fractional Laplacian which is the infinitesimal generator of a symmetric $\alpha$-stable process $\{X_t\}_{t\geq 0}$ in $\R^d$ whose L\'evy exponent is given by $\psi(\xi)=\|\xi\|^{\alpha}$, i.e., 
\[\E\left[\e^{i\xi\cdot X_t}\right]=\e^{-t \psi(\xi)}.\] Note that when $\alpha=2$,  $-(-\Delta)^{\alpha/2}$ is just Laplacian $\Delta$ which is the  infinitesimal generator of Brownian motion in $\R^d$.
 
 In addition, in \eqref{linearFSHE} and \eqref{FSHE},  $\dot{F}$ is  a spatially homogeneous generalized Gaussian random field with covariance of the form 
\begin{equation}
\Cov(\dot F(t,x),\dot F(s,y))=\delta(|t-s|) f(|x-y|),
\end{equation} where  $f$ is of  Riesz type kernel, i.e., 
 \begin{equation}\label{eq:Riesz}
f(z):=c_{\beta,d}\|z\|^{-\beta} \quad\text{and}\quad   c_{\beta,d}:= \frac{2^\beta\, \pi^{d/2}\, \Gamma(\beta/2)}{\Gamma((d-\beta)/2)},
\end{equation}  for $\beta \in (0, d)$. 
In this way, we have 
\begin{equation}
\hat f (\xi)=\|\xi\|^{-d+\beta}
\end{equation}
where $\hat f$ is the Fourier transform of $f$, i.e., $\hat f(\xi):=\int_{\R^d}  f(x) \exp(-i\, \xi\cdot x)\, \d x$.  Here, one can  see that as $\beta\to d$, $\dot F$ becomes space-time white noise. On the other hand, as $\beta \to 0$, $\dot F$ becomes white noise in time only, i.e., $F$ is just a one-dimensional Brownian motion $B_t$ which is independent of the space variable $x$. In this case ($\beta=0$), the solutions to \eqref{linearFSHE} and \eqref{FSHE} are $Z_t(x)=B_t$ and $\log u_t(x)=B_t -t/2$, which do not have any chaotic spatial structure.

Partial differential equations (both stochastic and deterministic PDEs) with fractional Laplacian    have received much attention last decades. In addition, the existence and uniqueness of solutions of stochastic heat equations driven by spatially colored noise  is also well-studied. In particular, it is  well-known that both \eqref{linearFSHE} and \eqref{FSHE} have unique solutions as long as $0<\beta < \alpha\wedge d$ (see, e.g., \cite{FD}). Thus, throughout this paper, we assume that
\begin{equation}\label{condition}
0<\beta<\alpha\wedge d \leq 2\wedge d.
\end{equation}

 We note that when $\alpha \in (0,2)$, $-(-\Delta)^{\alpha/2}$ is a \emph{non-local} operator. In addition, when $\beta\in (0,d)$, the spatial correlation between $\dot F(t,x)$ and $\dot F(t,y)$ does not vanish even if $\|x-y\|$ is very large, which is different from the case where $\beta=d$, i.e., $\dot F$ is space-time white noise. These  non-local property of the fractional Laplacian and long-range correlation of the noise affect to the correlation length and also the largest order of the tall peaks for both the  linear stochastic heat equation with fractional Laplacian \eqref{linearFSHE} and the parabolic Anderson model for fractional Laplacian \eqref{FSHE}. In addition, it is known that the parabolic Anderson model for fractional Laplacian is intermittent (at least weakly) but the linear stochastic heat equation is not since it  is a Gaussian random field (see \cite{FKN, FD}). On the other hand, our main theorems (Theorems \ref{th:linear} and \ref{th:pam} below)  show that the tall peaks for both \eqref{linearFSHE} and \eqref{FSHE} show multi-fractal behavior, i.e., for infinitely different scale paramter $\gamma$'s, we get all different macroscopic Hausdorff dimensions of the tall peaks. 

Let $\log_{+} r:=\log(r\vee \e)$ for $r\in \R$. Here is our first main theorem about the tall peaks for the linear stochastic heat equation with fractional Laplacian:
\begin{theorem}\label{th:linear}
Define 
\begin{equation}\label{eq:set:linear}
 \cP_{Z_t}(\gamma):=\left\{ x\in\R^d: \, Z_t(x)\geq \left(2\, \cab \, t^{(\alpha-\beta)/\alpha}\, \gamma\, \log_+\|x\|  \right)^{1/2} \right\},
 \end{equation} where $\cab:=\frac{c_{\beta,d}\Gamma(\beta/\alpha)}{(\alpha-\beta)2^{\beta/\alpha}}$ and $\Gamma(x)$ is the Gamma function.

Then, for every $t>0$ and $\gamma>0$, we have
\begin{equation}
\Dimh \left[ \cP_{Z_t}(\gamma)\right]=(d-\gamma) \vee  0, \quad a.s..
\end{equation}
In addition, we also have
\begin{equation}\label{eq:sup:linear}
\limsup_{\|x\|\to \infty} \frac{Z_t(x)}{\sqrt{\log\|x\|}} = \sqrt{2d\bm{c}_z t^{(\alpha-\beta)/\alpha}}.
\end{equation}

\end{theorem}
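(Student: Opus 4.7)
The plan is to exploit the fact that $Z_t(\cdot)$ is a centered, spatially stationary Gaussian random field, and then to apply the Barlow--Taylor macroscopic-dimension machinery as developed in \cite{KKX}. The mild-solution representation
\[
Z_t(x)=\int_0^t\!\!\int_{\R^d} p_{t-s}^\alpha(x-y)\, F(\d s,\d y),
\]
with $p^\alpha_s$ the transition density of the symmetric $\alpha$-stable process, together with It\^o's isometry and Plancherel's identity, reduces the variance to
\[
\sigma_t^2 := \Var[Z_t(x)] = \int_0^t\!\!\d s \int_{\R^d} \e^{-2s\|\xi\|^\alpha}\, \|\xi\|^{-d+\beta}\,\d\xi.
\]
A change of variable $u=2t\|\xi\|^\alpha$ in polar coordinates gives $\sigma_t^2=\cab\, t^{(\alpha-\beta)/\alpha}$, so the threshold in \eqref{eq:set:linear} is exactly $\sqrt{2\gamma\sigma_t^2\log_+\|x\|}$. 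The standard Gaussian tail estimate then yields
\[
c\,\|x\|^{-\gamma}/\sqrt{\log\|x\|}\;\leq\;\P\bigl(x\in\cP_{Z_t}(\gamma)\bigr)\;\leq\;\|x\|^{-\gamma},
\]
the one-point estimate on which the remainder of the argument rests.

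For the upper bound on $\Dimh$, we fix $\rho>(d-\gamma)\vee 0$, partition each dyadic annulus $S_n=\{x\in\R^d:2^{n-1}<\|x\|\leq 2^n\}$ into order $2^{nd}$ unit cubes, and control $\{Q\cap\cP_{Z_t}(\gamma)\neq\emptyset\}$ by discretizing the supremum of $Z_t$ over $Q$ via its spatial modulus. The spectral increment bound $\E[(Z_t(x)-Z_t(y))^2]\lesssim \|x-y\|^{\alpha-\beta}\wedge 1$, read off from $\hat f(\xi)=\|\xi\|^{-d+\beta}$, combined with Borell--TIS or Dudley's inequality, gives $\P(\sup_{x\in Q}Z_t(x)\geq\sqrt{2\gamma\sigma_t^2\log_+\|x_Q\|})\leq C\|x_Q\|^{-\gamma}(\log\|x_Q\|)^{O(1)}$. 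A union bound then produces $\sum_n 2^{-n\rho}\,\E\#\{Q\subset S_n: Q\cap\cP_{Z_t}(\gamma)\neq\emptyset\}<\infty$, so the Barlow--Taylor definition of $\Dimh$ yields $\Dimh[\cP_{Z_t}(\gamma)]\leq\rho$ a.s., and sending $\rho\downarrow(d-\gamma)\vee 0$ gives the upper bound on the dimension. Applied with any $\gamma>d$, the same estimate forces $\cP_{Z_t}(\gamma)$ to be bounded a.s., which delivers the upper half of \eqref{eq:sup:linear} after letting $\gamma\downarrow d$.

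The lower bound requires producing many tall peaks on each macroscopic scale. Fix $\gamma<d$ and $\rho<d-\gamma$, and let $N_n$ count the unit cubes $Q\subset S_n$ whose center $x_Q$ satisfies $Z_t(x_Q)\geq\sqrt{2\sigma_t^2\gamma\log_+\|x_Q\|}$. The lower Gaussian tail gives $\E N_n\gtrsim 2^{n(d-\gamma)}/\sqrt{n}$, and the decisive step is a matching second-moment estimate. Writing
\[
\Cov\bigl(Z_t(x),Z_t(y)\bigr) = \int_0^t (p_{2s}^\alpha*f)(x-y)\,\d s,
\]
and using that $p_{2s}^\alpha$ is concentrated on scale $s^{1/\alpha}\ll\|x-y\|$ while $f(z)=c_{\beta,d}\|z\|^{-\beta}$, we read off polynomial covariance decay in $\|x-y\|$, and then invoke a Slepian/Gaussian-conditioning comparison to reduce $\P(Z_t(x)\geq\lambda,Z_t(y)\geq\lambda)$ to the product of marginals up to a negligible correction once $\|x-y\|$ exceeds a slowly growing correlation length. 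Summing over pairs in $S_n$ bounds $\Var N_n\lesssim(\E N_n)^2$ up to lower-order terms, Paley--Zygmund produces $N_n\gtrsim 2^{n(\rho-\epsilon)}$ with positive probability, and a Borel--Cantelli argument along the deterministic scales $S_n$ upgrades this to an almost-sure statement. This proves $\Dimh[\cP_{Z_t}(\gamma)]\geq d-\gamma$, and sending $\gamma\uparrow d$ in the resulting unboundedness of $\cP_{Z_t}(\gamma)$ furnishes the lower half of \eqref{eq:sup:linear}.

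The principal obstacle is the correlation-decay step. Because $\dot F$ itself has long-range spatial correlations decaying only polynomially in $\|z\|^{-\beta}$, the Gaussian field $Z_t$ inherits polynomial (rather than exponential) covariance decay, so distant values cannot simply be treated as independent; the correlation length must be chosen carefully as a function of the threshold $\lambda$ and balanced against the Paley--Zygmund error, and the convolution analysis of $\int_0^t p_{2s}^\alpha*f\,\d s$ must be made quantitative enough to track constants. The remaining ingredients --- Gaussian tail asymptotics, Borell/Dudley inequalities for sample-path regularity, and the Barlow--Taylor covering definition of $\Dimh$ --- are by now standard and can largely be adapted from \cite{KKX} once the Gaussian structural estimates above are in place.
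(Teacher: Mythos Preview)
Your upper-bound argument is essentially the paper's: both cover each shell $\cS_n$ by unit cubes and control $\P\bigl(\sup_{Q}Z_t>\sqrt{2\gamma\sigma_t^2\log\|x_Q\|}\,\bigr)$ via the Gaussian structure. The paper invokes the Qualls--Watanabe asymptotics for the supremum tail, whereas you appeal to Borell--TIS/Dudley; either delivers the $\|x_Q\|^{-\gamma+o(1)}$ bound needed for the covering estimate, and the passage to \eqref{eq:sup:linear} is the same.

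For the lower bound the two routes diverge, and your second-moment scheme has two gaps as written. First, counting unit-cube peaks $N_n$ in $\cS_n$ does not by itself lower-bound $\Dimh$: a single cube of side comparable to $\e^n$ covers all of $\cS_n$ at cost $O(1)$ in $\nu_\rho^n$, so ``$N_n$ large'' is worthless unless the peaks are spread at \emph{macroscopic} scales. You must instead show that each box $Q(x,\e^{n\theta})$ with $x\in\Pi_n(\theta)$ contains a peak (the $\theta$-thick criterion, Proposition~\ref{pr:thick}); your counting is at the wrong scale. Second, Paley--Zygmund yields only $\P(N_n\gtrsim\E N_n)\ge c>0$, which cannot be fed into Borel--Cantelli across the non-independent shells; you would need the Chebyshev ratio $\Var N_n/(\E N_n)^2$ to be \emph{summable} in $n$, and with only polynomial covariance decay $\Corr(Z_t(x),Z_t(y))\lesssim\|x-y\|^{-\beta}$ this requires a quantitative two-point analysis you have not supplied.

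The paper sidesteps both issues by exploiting Gaussianity directly. For $\{y_i\}\subset\Pi_n(\delta)\cap Q(x,\e^{n\theta})$ it replaces the true correlations by their maximum $r_n\le c_3\e^{-\beta\delta n}$ (from \eqref{eq:cov:linear}), builds an equally-correlated family $\tilde Z_i=Y+U_i$ via Berman's decomposition ($Y$ common with $\Var Y=r_n$, the $U_i$ independent), and applies Slepian's inequality to get $\P(\max_i Z_t(y_i)\le\lambda)\le\P(\max_i\tilde Z_i\le\lambda)$. The right-hand side splits into $\bigl(1-\P(U_1>\lambda')\bigr)^m+\P(Y>\lambda-\lambda')$, both doubly-exponentially small in $n$. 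This is done box-by-box over $\Pi_n(\theta)$, so it simultaneously gives summable complement probabilities and the $\theta$-thick structure. The polynomial correlation enters only through $\Var Y=r_n$, which is handled in one line. Your identification of correlation decay as the crux is correct, but the Slepian/Berman device dissolves it far more cleanly than any second-moment computation would.
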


We now consider the parabolic Anderson model for fractional Laplacian. 
\begin{theorem}\label{th:pam}
Define 
\begin{equation}\label{eq:set:pam}
 \cP_{u_t}(\gamma):=\left\{ x\in\R^d: \, \log_{+} u_t(x)\geq \gamma\, t^{(\alpha-\beta)/(2\alpha-\beta)}(\log_+\|x\|)^{\alpha/(2\alpha-\beta)}    \right\}.
 \end{equation}
Then, there exists two constants $0<{\bm{c}} \leq {\bm{C}} <\infty$ which only depend on $\alpha, \beta, d$ such that, for every $t>0$ and $\gamma>0$, we have
\begin{equation}
 0\vee \left(d-{\bm{C}}\gamma^{(2\alpha-\beta)/\alpha}\right)\leq  \Dimh \left[ \cP_{u_t}(\gamma)\right]\leq \left(d-{\bm{c}}\gamma^{(2\alpha-\beta)/\alpha}\right)\vee 0, \quad a.s..
\end{equation}
In addition, we have
\begin{equation}
 t^{(\alpha-\beta)/(2\alpha-\beta)} \left(\frac{d}{\bm{C}}\right)^{\alpha/(2\alpha-\beta)}\leq \limsup_{\|x\|\to \infty} \frac{\log_+ u_t(x) }{(\log \|x\|)^{\alpha/(2\alpha-\beta)}} \leq  t^{(\alpha-\beta)/(2\alpha-\beta)} \left(\frac{d}{\bm{c}}\right)^{\alpha/(2\alpha-\beta)}.
 \end{equation}
\end{theorem}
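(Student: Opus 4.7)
The plan is to mirror the strategy used in [KKX] for the classical case, but with moment and tail estimates adapted to the fractional--stable setting. The main input is a pair of two--sided estimates on the $k$-th moment of the solution: there exist constants $0 < a_1 \leq a_2 < \infty$, depending only on $\alpha,\beta,d$, such that for all integers $k\geq 2$ and all $t>0$,
\begin{equation*}
\exp\!\bigl(a_1\, k^{(2\alpha-\beta)/(\alpha-\beta)}\, t\bigr) \;\leq\; \E\!\bigl[u_t(x)^k\bigr] \;\leq\; \exp\!\bigl(a_2\, k^{(2\alpha-\beta)/(\alpha-\beta)}\, t\bigr).
\end{equation*}
The upper bound is obtained by iterating the Walsh mild formulation and applying Burkholder--Davis--Gundy together with the fact that $\int \widehat{p_t}(\xi)^2\,\|\xi\|^{-d+\beta}\,\d\xi$ is of order $t^{-(d-\beta)/\alpha}$ for the $\alpha$-stable kernel $p_t$. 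The lower bound comes either from a Feynman--Kac representation for $\E u_t(x)^k$ in terms of intersection local times of $k$ independent symmetric $\alpha$-stable motions and an associated scaling argument, or from the now-standard Foondun--Khoshnevisan comparison technique. The specific shape $k^{(2\alpha-\beta)/(\alpha-\beta)}$ is dictated by scaling and matches the classical value $k^3$ at $\alpha=2$, $\beta=1$.

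Feeding these moment bounds into a Chebyshev inequality and optimizing over $k$ (the optimal $k$ is of order $((\log\lambda)/t)^{(\alpha-\beta)/\alpha}$) yields matching tail estimates, for all large $\lambda$, of the form
\begin{equation*}
\exp\!\bigl(-C_2 (\log\lambda)^{(2\alpha-\beta)/\alpha} t^{-(\alpha-\beta)/\alpha}\bigr) \;\leq\; \P(u_t(x)>\lambda) \;\leq\; \exp\!\bigl(-C_1 (\log\lambda)^{(2\alpha-\beta)/\alpha} t^{-(\alpha-\beta)/\alpha}\bigr),
\end{equation*}
where the lower tail is recovered from the $k$-th moment lower bound via a Paley--Zygmund inequality with the optimal integer $k$. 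Substituting the gauge $\log\lambda = \gamma\, t^{(\alpha-\beta)/(2\alpha-\beta)} (\log_+\|x\|)^{\alpha/(2\alpha-\beta)}$ both the $\gamma$-- and $t$--dependence collapse, giving
\begin{equation*}
\|x\|^{-\bm{C}\gamma^{(2\alpha-\beta)/\alpha}} \;\leq\; \P\bigl(x \in \cP_{u_t}(\gamma)\bigr) \;\leq\; \|x\|^{-\bm{c}\gamma^{(2\alpha-\beta)/\alpha}}
\end{equation*}
for appropriate constants $0<\bm{c}\leq\bm{C}<\infty$. The upper bound on $\Dimh[\cP_{u_t}(\gamma)]$ follows from the Barlow--Taylor definition: cover each dyadic annulus $\{\e^{n}\le\|x\|<\e^{n+1}\}$ by unit cubes, bound the expected number of cubes meeting $\cP_{u_t}(\gamma)$ via the upper tail and a union bound, and conclude that the $\rho$-dimensional outer measure is summable whenever $\rho > d-\bm{c}\gamma^{(2\alpha-\beta)/\alpha}$.

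For the lower bound on $\Dimh$ I would use a second--moment argument together with a \emph{spatial localization} of the solution. Concretely, define an auxiliary field $\ul_t(x)$ by replacing $\dot F$ in \eqref{FSHE} with its restriction to a ball of radius $\ell$ centred at $x$ and by killing the stable semigroup upon exiting a comparable ball. Quantitative $L^k$--estimates on $u_t(x)-\ul_t(x)$ (which only use the moment bound above and the fractional Duhamel identity) show that $\ul_t(x)$ and $\ul_t(y)$ are independent as soon as $\|x-y\|\ge 2\ell$, while still satisfying the same lower tail up to a multiplicative factor. A Paley--Zygmund computation applied to the counting random variable of cubes $Q\subset\{\e^n\le\|x\|<\e^{n+1}\}$ with $\sup_{x\in Q}\ul_t(x)\ge\lambda$ then produces, for every $\rho<d-\bm{C}\gamma^{(2\alpha-\beta)/\alpha}$, a positive $\rho$--mass in infinitely many dyadic annuli, which is exactly what is needed for the Barlow--Taylor lower bound. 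Finally, the two--sided $\limsup$ statement is a direct consequence: the upper inequality is Borel--Cantelli applied to the upper tail over the grid $\Z^d$, while the lower inequality follows by combining the localized independence with Borel--Cantelli in each dyadic annulus.

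The hard part is the localization step. Unlike the Gaussian heat kernel, the $\alpha$-stable density has only polynomial decay at infinity, and the Riesz covariance $f(\|x-y\|)=c_{\beta,d}\|x-y\|^{-\beta}$ never vanishes, so neither the propagator nor the noise is genuinely local. One must track both heavy--tailed kernel contributions and long--range noise correlations in the error $u_t(x)-\ul_t(x)$, and choose $\ell = \ell(\|x\|,t,\gamma)$ large enough that this error is dominated by the tall--peak threshold, yet small relative to the spacing between centres of cubes in a given dyadic annulus. This compromise is what prevents the constants $\bm c$ and $\bm C$ from coinciding, in contrast with the sharp constant $\bm c_z$ available in the Gaussian Theorem~\ref{th:linear}.
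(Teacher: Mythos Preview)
Your outline captures the correct architecture and the correct exponents, and it coincides with the paper's approach at the level of moment asymptotics (BDG for the upper bound, Feynman--Kac for the lower) and the general covering/independence strategy. Two places deserve more care, however.

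For the upper dimension bound you write that one covers each annulus $\cS_n$ by unit cubes and bounds the expected number of cubes meeting $\cP_{u_t}(\gamma)$ via the pointwise upper tail and a union bound. But the event ``$Q(y,1)$ meets $\cP_{u_t}(\gamma)$'' is $\{\sup_{x\in Q(y,1)}\log u_t(x)\ge \text{threshold}\}$, not a one-point event, so a pointwise tail estimate does not suffice. The paper closes this gap by first proving an $L^k$ increment bound $\|u_t(x)-u_t(y)\|_k^k\le \exp(\bar c\,t\,k^{(2\alpha-\beta)/(\alpha-\beta)})\|x-y\|^{(\alpha-\beta)k/2}$, then invoking a quantitative Kolmogorov continuity theorem to control the modulus of continuity on $Q(y,1)$, and finally combining this with the pointwise tail at a carefully chosen sub-grid scale $\epsilon(s)$. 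Without this step your covering argument does not go through.

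For the lower bound your localization scheme---``restrict $\dot F$ to a ball and kill the semigroup upon exit''---does not by itself produce independence, because the Riesz noise has nontrivial covariance at all distances: multiplying a colored noise by a spatial indicator does not decorrelate far-away regions. The paper's device is different and essential: one first couples $F$ to a space--time white noise $\eta$ via $F_t(\phi)=\int(\phi*h)\,\d\eta$ with $h(x)=c\|x\|^{-(d+\beta)/2}$ (so that $h*h=f$), then truncates $h$ to $h_\ell:=h\,\1_{\{\|\cdot\|\le\ell\}}$ and simultaneously restricts the Duhamel integral to $\mathcal{I}_t(x;\ell)$, and finally runs a finite Picard iteration $u^{(\ell,m)}$. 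Only this combination yields genuine independence of $u^{(\ell,m)}_t(x)$ and $u^{(\ell,m)}_t(y)$ once $\|x-y\|_\infty\ge 2m(\ell t^{1/\alpha}+\ell)$. The paper then verifies the abstract coupling condition \eqref{cond:LB} of Theorem~\ref{th:Gen:LIL:LB} (rather than a second-moment/Paley--Zygmund count as you suggest), choosing $\ell_n\asymp\exp(n^{(3\alpha-\beta)/(4\alpha-2\beta)})$ and $m_n\asymp\log\ell_n$ so that the approximation error decays faster than any $\e^{-cn}$ while the independence range stays below the $\delta$-skeleton spacing $\e^{\delta n}$. Your Paley--Zygmund route could in principle be made to work, but it would still require this white-noise coupling to manufacture independence; that is the step your proposal leaves unspecified.
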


Khoshnevisan et al in \cite{KKX} provide certain conditions for obtaining the upper and lower bounds of the macroscopic Hausdorff dimensions of the sets of the tall peaks for general random fields. Regarding the condition for  the lower bound,  the main point  is to construct some independent random variables which are close to the original random field.  For the construction of those independent random variables, we use quite different approaches. For Theorem \ref{th:linear}, since $\{Z_t(x)\}$ is a Gaussian random field, we use Berman's theorem (\cite{Berman}) and Slepian's inequality (\cite{Slepian}) to construct independent random variables (see also Remark \ref{rem:lower}). In this way, computations are much simpler than the ones for Theorem \ref{th:pam}. On the other hand, for Theorem \ref{th:pam}, we follow a localization argument developed by Conus et al in \cite{CJKS} (see Section \ref{sec:lower:pam}). 
Regarding the proof of the upper bounds,  we will verify the condition \eqref{cond:UB}. This condition comes from a natural way, i.e., we can get the condition when we use a covering of boxes with side 1 (we explain how \eqref{cond:UB} leads to the upper bound in the proof of the upper bound of Theorem \ref{th:linear}, right after Lemma \ref{lem:linear:tail}). Since $Z_t(x)$ is Gaussian, as for the lower bound, we use Qualls and Watanabe's result (\cite{QW}) on gaussian random fields for the upper bound of Theorem \ref{th:linear}. On the other hand, for Theorem \ref{th:pam}, we apply a quantitative form of Kolmogorov's continuity theorem to verify \eqref{cond:UB}.

This paper is organized as follows: Section \ref{sec:prelim} introduces the macroscopic Hausdorff dimension and certain conditions for obtaining the upper and lower bounds of the Hausdorff dimensions of the tall peaks. In Section \ref{sec:linear}, we  prove Theorem \ref{th:linear}. Section \ref{sec:FSHE} deals with the parabolic Anderson model for fractional Laplacian and provides a proof of Theorem \ref{th:pam}.

\section{Preliminaries}\label{sec:prelim}
\subsection{Macroscopic Hausdorff dimension}
We first introduce the macroscopic Hausdorff dimension defined by Barlow and Taylor  \cite{BarlowTaylor1, BarlowTaylor}.  
For all integers $n\geq 1$, we define
\begin{equation}
	\cV_n :=\left[-\e^n\,, \e^n\right)^d, \quad
	\cS_0 :=\cV_0,  \quad\text{and}\quad
	\cS_{n+1}  :=\cV_{n+1}\setminus\cV_n.
\end{equation}

\begin{definition}
	Let $\mathcal{C}$ denote the collection of all cubes of the form
	\begin{equation}\label{box}
		Q(x\,,r) := \left[ x_1\,,x_1+r\right)\times\cdots\times
		\left[ x_d\,,x_d+r\right),
	\end{equation}
	as $x :=(x_1\,,\ldots,x_d)\in \R^d$ and $r\in [1, \infty)$. For $Q(x,r) \in \mathcal{C}$, we call $r$ the \emph{side} of $Q$, denoted as $s(Q)=r$.  Let $E \subset \R^d$. We now define, 
 for any number $\rho>0$, and all integers $n\geq 1$,
\begin{equation}\label{nu:rho}
	\nu^n_\rho(E) := \min \left\{ \sum_{i=1}^m\left(\frac{s(Q_i)}{\e^n}\right)^\rho\, : Q_i \in \mathcal{C}, Q_i \subset \cS_n,  E\cap \cS_n \subset \cup_{i=1}^m Q_i    \, \right\}.
\end{equation} \end{definition}

\begin{definition}
	The Barlow--Taylor \emph{macroscopic Hausdorff dimension} 
	of $E\subseteq\R^d$ is defined as 
	\begin{equation} 
		\Dimh E :=
		\inf\left\{\rho>0:\ \sum_{n=1}^\infty\nu^n_\rho(E)
		<\infty\right\}=\sup \left\{\rho>0:\ \sum_{n=1}^\infty\nu^n_\rho(E)
		=\infty\right\}.
	\end{equation}
\end{definition}

\subsection{General bounds}\label{sec:general}
Let $X:=\{X(x); x \in \R^d\}$ be a  real-valued random field. For all real numbers $b\in(0\,,\infty)$
we  define
\begin{equation}\label{eq:gen:tail:LB}
	\bm{c}(b) := -\limsup_{z\to\infty} z^{-b}
	\sup_{x\in \R^d}\log\P\left\{ X(x)> z\right\},
\end{equation}
and
\begin{equation}\label{eq:gen:tail:UB}
	\bm{C}(b):=
	-\liminf_{z\to\infty} z^{-b}\inf_{x\in \R^d }\log\P\left\{ X(x)> z\right\}.
\end{equation}
We now define the set of the tall peaks for $X$ as  
\[ \cP_{X}^{(c)} (\gamma) := \left\{x\in \R^d :\ \|x\|>\exp(\e),\
		X(x) \ge \left(\frac{\gamma}{c}\log \|x\|\right)^{1/b}\right\}. \]

Khoshnevisan et al provide  some conditions for upper and lower bounds of the macroscopic Hausdorff dimension of $\cP_X^{(c)}(\gamma)$ in \cite{KKX}. We first consider the condition for the upper bound, which is given in \cite[Theorem 4.1]{KKX}.

\begin{theorem}[A general upper bound]\label{th:upper:general}
Suppose that there exists $b\in(0\,,\infty)$
	such that $\bm{c}(b) \in [0,\infty)$ and for all $\gamma\in(0\,,d)$,
	\begin{equation}\label{cond:UB}
		\sup_{y\in \R^d}\P\left\{\sup_{x\in Q(y,1)}X(x) > 
		\left( \frac{\gamma}{\bm{c}(b)}\,\log s\right)^{1/b}\right\}
		\le s^{-\gamma + o(1)}\text{ as $s\to\infty$}.
	\end{equation}
	Then we have
	\begin{equation}\label{dim:gen:UB}
		\Dimh \left[\cP_X^{(\bm{c}(b))}(\gamma) \right]\le
		d-\gamma, \text{a.s.}
	\end{equation}
	for all $\gamma\in(0\,,d)$.
	In addition, we have
	\[ \limsup_{\|x\|\to \infty} \frac{X(x)}{\log \|x\|} \leq \left(\frac{d}{\bm{c(b)}} \right)^{1/b}.
	\]
\end{theorem}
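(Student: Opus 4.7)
The plan is a straightforward first-moment / covering argument driven by the hypothesis \eqref{cond:UB}, exploiting the fact that the Barlow--Taylor content is most naturally estimated on the unit scale, which matches the scale on which \eqref{cond:UB} gives a tail bound.

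Fix $\gamma\in(0,d)$ and any $\rho>d-\gamma$. For each integer $n\ge 1$, cover $\cS_n$ by the lattice of unit cubes $\{Q(y,1):y\in \Z^d\cap \cV_{n+1}\}$; write $\cJ_n$ for this collection. Its cardinality is at most $C_d\,\e^{nd}$ for a dimensional constant $C_d$. Call $Q\in\cJ_n$ \emph{bad} if $Q\cap \cP_X^{(\bm{c}(b))}(\gamma)\cap \cS_n\ne\emptyset$. For any $x\in Q(y,1)\cap\cS_n$ with $n\ge 2$ we have $\|x\|\ge \e^{n-1}$, so membership in $\cP_X^{(\bm{c}(b))}(\gamma)$ forces $X(x)\ge\bigl(\tfrac{\gamma}{\bm{c}(b)}\log \e^{n-1}\bigr)^{1/b}$. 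Consequently
\[
\P\{Q(y,1)\text{ is bad}\}\le \P\Bigl\{\sup_{x\in Q(y,1)}X(x)\ge \bigl(\tfrac{\gamma}{\bm{c}(b)}\log \e^{n-1}\bigr)^{1/b}\Bigr\}.
\]
Applying \eqref{cond:UB} with $s=\e^{n-1}$ and replacing $\gamma$ by $\gamma-\varepsilon$ to absorb the $o(1)$ term (legal since \eqref{cond:UB} holds for all $\gamma\in(0,d)$) gives, for $n$ sufficiently large, $\P\{Q\text{ is bad}\}\le \e^{-(n-1)(\gamma-\varepsilon)}$.

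Taking expectations in the defining inequality for $\nu_\rho^n$ and using the trivial bound obtained by covering $\cP_X^{(\bm{c}(b))}(\gamma)\cap \cS_n$ by the bad cubes (each of side $1$, so contributing $\e^{-n\rho}$),
\[
\E\bigl[\nu_\rho^n\bigl(\cP_X^{(\bm{c}(b))}(\gamma)\bigr)\bigr]\le \e^{-n\rho}\cdot \E[\#\{\text{bad }Q\in\cJ_n\}]\le C_d\,\e^{n(d-\rho)}\cdot \e^{-(n-1)(\gamma-\varepsilon)}.
\]
Choosing $\varepsilon<\rho-(d-\gamma)$ makes the exponent strictly negative, so the right-hand side is summable in $n$. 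Tonelli then yields $\sum_n\nu_\rho^n<\infty$ a.s., which gives $\Dimh\cP_X^{(\bm{c}(b))}(\gamma)\le \rho$; letting $\rho\downarrow d-\gamma$ yields \eqref{dim:gen:UB}.

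For the $\limsup$ bound, I would repeat the covering estimate with $\gamma>d$ (the set $\cP_X^{(\bm{c}(b))}(\gamma)$ is still well defined). The same computation shows that the expected number of bad cubes over all $n$ is bounded by $\sum_n C_d\,\e^{n(d-\gamma+\varepsilon)}<\infty$ provided $\varepsilon<\gamma-d$, so by Borel--Cantelli only finitely many cubes are bad; equivalently, only finitely many $x\in\R^d$ satisfy $X(x)\ge (\tfrac{\gamma}{\bm{c}(b)}\log\|x\|)^{1/b}$. Hence $\limsup_{\|x\|\to\infty} X(x)/(\log\|x\|)^{1/b}\le (\gamma/\bm{c}(b))^{1/b}$ a.s., and taking $\gamma\downarrow d$ gives the stated inequality.

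The only mildly delicate point in this plan is the passage from \eqref{cond:UB} as stated (with the explicit $o(1)$) to an honest-to-goodness geometric tail bound uniformly in $n$; that is why I replace $\gamma$ by $\gamma-\varepsilon$ at the start. Everything else is bookkeeping: counting lattice cubes in the annulus $\cS_n$, summing a convergent geometric series, and using Borel--Cantelli. I do not anticipate a serious obstacle — the argument is dictated by the scaling built into the definition of $\nu_\rho^n$, which is precisely tuned so that unit-cube coverings on $\cS_n$ contribute $\e^{-n\rho}$.
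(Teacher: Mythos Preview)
Your argument for the dimension bound is correct and is exactly the covering/first-moment computation the paper sketches in its proof of the upper bound of Theorem~\ref{th:linear} (the paper itself defers the full proof of Theorem~\ref{th:upper:general} to \cite{KKX}); the choice of unit cubes, the expected-count bound $\E\nu_\rho^n\le C\e^{n(d-\gamma-\rho+o(1))}$, and the conclusion via summability are identical. One small caveat on the $\limsup$ part: the hypothesis \eqref{cond:UB} is stated only for $\gamma\in(0,d)$, so you cannot literally invoke it with $\gamma>d$ as your argument does; in the paper's applications the underlying tail estimate in fact holds for all $\gamma>0$, so this is harmless there, and the paper's own one-line treatment of the $\limsup$ (``a similar probability estimate\ldots and the Borel--Cantelli lemma'') is no more careful on this point.
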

The proof of Theorem \ref{th:upper:general} is given in \cite[Theorem 4.1]{KKX}. For readers who may wonder how \eqref{cond:UB} is a natural condition,  we give a quick  explanation how \eqref{cond:UB} implies the upper bound in the proof of Theorem \ref{th:linear}.  We will use Theorem \ref{th:upper:general} to get the upper bounds in both of Theorem \ref{th:linear} and Theorem \ref{th:pam}.

 Let us now consider the condition for the lower bound. We first give some notations. Let
\begin{equation}
	\Pi_n(\theta) := A_n(\theta)\times\cdots\times A_n(\theta)
	\qquad[d\text{ times}];
\end{equation}
where
\begin{equation}
	A_n(\theta) := \bigcup_{\substack{%
	0\le j\le \e^{n(1-\theta)} :\\ j\in\Z}}
	\left\{ \e^n + j \e^{\theta n} \right\}
\end{equation}
This $\{\Pi_n(\theta)\}_{n=0}^\infty$ is called a $\theta$-skeleton of $\R^d$ in \cite[Definition 4.2]{KKX}. 

\begin{definition}\label{def:thick}
	Let $E\subseteq\R^d$ be a set and choose and fix
	some real number $\theta\in(0\,,1)$. We say that
	$E$ is \emph{$\theta$-thick} if there exists 
	an integer $M=M(\theta)$ such that 
	\begin{equation}
		E\cap Q(x\,,\e^{\theta n})\neq\varnothing,
	\end{equation}
	for all $x\in\Pi_n(\theta)$ and $n\ge M$.
\end{definition}
 
 When a set $E$ is a \emph{uniform} set, i.e., a set of points uniformly spaced in $\R^d$, then the macroscopic Hausdorff dimension of $E$ is easily computed. For example, when $U=\bigcup_{n=1}^\infty \Pi_n(\theta)$,  we get $\Dimh(U)=d(1-\theta)$ (see \cite[Section 5]{BarlowTaylor}). Therefore, when the set $E\subset \R^d$ is $\theta$-thick or contains a $\theta$-thick  set, we can expect to get the lower bound as follows:
\begin{proposition}[Proposition 4.4, \cite{KKX}]\label{pr:thick}
	If $E\subset\R^d$ is $\theta$-thick or contains a set which is $\theta$-thick for some 
	$\theta\in(0\,,1)$, then 
	\begin{equation}
		\Dimh E\ge d(1-\theta).
	\end{equation}
\end{proposition}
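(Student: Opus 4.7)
The plan is to argue directly from the definition of $\nu^n_\rho$. By monotonicity of $\Dimh$ under set inclusion I may assume without loss of generality that $E$ itself is $\theta$-thick (if $E$ merely contains such a subset $E'$, apply the argument to $E'$). It then suffices to show that for every $\rho \in (0, d(1-\theta))$ the quantity $\nu^n_\rho(E)$ is bounded away from zero for infinitely many $n$, since then $\sum_n \nu^n_\rho(E) = \infty$ and hence $\Dimh E \geq \rho$; letting $\rho \uparrow d(1-\theta)$ yields the claim.

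Fix such a $\rho$ and $n \geq M(\theta)$. The skeleton $\Pi_n(\theta)$ consists of $\asymp \e^{nd(1-\theta)}$ points forming a regular grid of spacing $\e^{\theta n}$ inside $[\e^n, 2\e^n]^d$, and the associated grid cubes $\{Q(x, \e^{\theta n})\}_{x \in \Pi_n(\theta)}$ are pairwise disjoint and lie in $\cV_{n+2} \setminus \cV_n = \cS_{n+1} \cup \cS_{n+2}$. For each $x$ the $\theta$-thickness furnishes a distinct point $y_x \in E \cap Q(x, \e^{\theta n})$, and a pigeonhole argument selects $N = N(n) \in \{n+1, n+2\}$ for which $\cS_N$ contains at least $c\, \e^{nd(1-\theta)}$ of the $y_x$'s (note $N(n) \to \infty$). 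Now let $\{Q_i\}_{i=1}^m$ be any admissible covering of $E \cap \cS_N$ with $Q_i \subset \cS_N$ and sides $r_i \in [1, \e^N]$. Since a cube of side $r$ intersects at most $C_d \max(1, r/\e^{\theta n})^d$ of the grid cubes, the covering must satisfy
\begin{equation*}
\sum_{i=1}^{m} \max\!\left(1, \frac{r_i}{\e^{\theta n}}\right)^d \;\geq\; c'\, \e^{nd(1-\theta)}.
\end{equation*}

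The crux is then the pointwise bound
\begin{equation*}
\left(\frac{r}{\e^N}\right)^\rho \;\geq\; c''\, \max\!\left(1, \frac{r}{\e^{\theta n}}\right)^d\, \e^{-nd(1-\theta)} \qquad \text{for all } r \in [1, \e^N],
\end{equation*}
which I would verify in two regimes. When $r \leq \e^{\theta n}$, the inequality reduces to $r^\rho \geq c''\, \e^{N\rho - nd(1-\theta)}$; since $\rho < d(1-\theta)$ and $N \leq n+2$, the exponent tends to $-\infty$, so the inequality holds uniformly for all large $n$. When $r > \e^{\theta n}$, the inequality simplifies after cancellation to $r^{\rho - d} \e^{nd - N\rho} \geq c''$; because $\rho < d$ the left-hand side is decreasing in $r$ and attains its minimum $\e^{(n-N)d} \geq \e^{-2d}$ at $r = \e^N$. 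Multiplying the two displayed inequalities and summing in $i$ yields $\nu^N_\rho(E) \geq c''' > 0$, which, together with $N(n) \to \infty$, forces divergence of $\sum_N \nu^N_\rho(E)$.

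The main obstacle I anticipate is the boundary bookkeeping: the thickness-produced points $y_x$ straddle the two consecutive shells $\cS_{n+1}$ and $\cS_{n+2}$, and small covering cubes ($r_i < \e^{\theta n}$) interact with the grid very differently from large ones. These are handled respectively by the pigeonhole choice of $N(n)$ and by the two-regime case split; everything else is a straightforward volume count, exploiting the regularity of the grid $\Pi_n(\theta)$ to convert the covering requirement into a quantitative lower bound on $\nu^N_\rho(E)$.
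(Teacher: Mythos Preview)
The paper does not prove this proposition; it is quoted from \cite{KKX} (Proposition~4.4 there) and invoked as a black box for the lower-bound arguments. There is therefore no proof in the present paper to compare your attempt against.

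Your argument is correct and is the natural direct one: count how many of the $\asymp \e^{nd(1-\theta)}$ pairwise disjoint grid cubes $Q(x,\e^{\theta n})$, $x\in\Pi_n(\theta)$, an admissible covering cube of side $r$ can meet, and convert this into a lower bound on $\sum_i (r_i/\e^N)^\rho$ via the two-regime pointwise estimate. One simplification worth noting: the pigeonhole between $\cS_{n+1}$ and $\cS_{n+2}$ is unnecessary, since for all large $n$ one has $2\e^n + \e^{\theta n} < \e^{n+1}$ (because $2<\e$ and $\theta<1$), so every cube $Q(x,\e^{\theta n})$ with $x\in\Pi_n(\theta)$ already lies in $\cS_{n+1}$; you may simply take $N=n+1$ throughout. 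The intersection count and the case split on $r\lessgtr \e^{\theta n}$ are handled correctly, and the range $r\in[1,\e^N]$ is adequate since any cube contained in $\cS_N$ has side at most $\e^N(1-\e^{-1})<\e^N$.
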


We now give the following general lower bound statement which is Theorem 4.7 in \cite{KKX}.

\begin{theorem}[A general lower bound]\label{th:Gen:LIL:LB}
	Suppose there exists $b\in(0\,,\infty)$ such that $\bm{C}(b)>0$. 
	Suppose in addition that for any $\delta\in (0,1)$  there exists an increasing nonrandom 
	measurable function $S:\R\to\R$ such that as $n\to\infty$
	\begin{equation}\label{cond:LB}
		n^{-1} \max_{\{t_i\}_{i=1}^m\in\Pi_n(\delta)}
		\max_{1\le j\le m}
		\inf_{\{Y_i\}_{i=1}^m\in\mathcal{I}}\log\P \{ |S(X_{t_j})
		- S(Y_j)|> 1\} \to -\infty,
	\end{equation}where $\mathcal{I}$ denotes the collection of all independent
	finite sequences of independent random variables. If $\gamma\in(0\,,d)$, then 
	\begin{equation}\label{eq:Gen:Dim:LB}
		\Dimh \left[ \cP_X^{(\bm{C}(b))} (\gamma) \right]\ge
		d-\gamma\ \, \text{a.s.}
	\end{equation}
	In addition, we have
	\[ \liminf_{\|x\|\to \infty} \frac{X(x)}{\log \|x\|} \geq \left(\frac{d}{\bm{C(b)}} \right)^{1/b}.
	\]
\end{theorem}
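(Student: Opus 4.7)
The plan is to apply Proposition~\ref{pr:thick}: for every $\theta > \gamma/d$ I will show that $\cP_X^{(\bm{C}(b))}(\gamma)$ almost surely contains a $\theta$-thick subset, so that $\Dimh \cP_X^{(\bm{C}(b))}(\gamma) \ge d(1-\theta)$, and then letting $\theta \downarrow \gamma/d$ will deliver the desired bound $\Dimh \ge d-\gamma$. To locate the thick skeleton points, I will look for exceedances on an even finer skeleton inside each box of the $\theta$-skeleton.

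Fix such a $\theta$ together with a $\delta \in (0, \theta - \gamma/d)$. For each $n$ and each $x \in \Pi_n(\theta)$ I hunt for a point $t \in \Pi_n(\delta) \cap Q(x, \e^{\theta n})$ at which $X(t)$ exceeds the tall-peak threshold $z_t := (\gamma \bm{C}(b)^{-1} \log_+ \|t\|)^{1/b}$. There are on the order of $\e^{nd(\theta-\delta)}$ candidate points per box and $\e^{nd(1-\theta)}$ boxes in the shell $\cS_n$. From the definition of $\bm{C}(b)$ I have the uniform pointwise lower tail bound
\begin{equation*}
\P\{X(t) > z\} \ge \exp\bigl(-(1+o(1))\,\bm{C}(b)\, z^b\bigr) \qquad \text{as } z \to \infty,
\end{equation*}
so for $\|t\| \asymp \e^n$ each candidate point surpasses the threshold with probability at least $\e^{-\gamma n(1+o(1))}$. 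Were the values $\{X(t)\}_{t\in \Pi_n(\delta)\cap Q(x,\e^{\theta n})}$ independent, the probability that no candidate in a given box exceeds the threshold would be at most
\begin{equation*}
\bigl(1 - \e^{-\gamma n(1+o(1))}\bigr)^{\e^{nd(\theta-\delta)}} \le \exp\bigl(-\e^{n[d(\theta-\delta)-\gamma](1+o(1))}\bigr),
\end{equation*}
which is doubly exponentially small because $d(\theta-\delta) > \gamma$ by construction. A union bound over the $\e^{nd(1-\theta)}$ boxes in $\cS_n$, followed by summation in $n$, then closes a Borel--Cantelli argument and delivers $\theta$-thickness almost surely.

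The real work is replacing the hypothetical ``were independent'' by a rigorous reduction, and this is precisely what hypothesis~\eqref{cond:LB} affords: enumerating the sub-skeleton $\Pi_n(\delta)\cap\cS_n$ as $\{t_j\}_{j=1}^m$ we obtain mutually independent $\{Y_j\}_{j=1}^m\in\mathcal{I}$ for which $\P\{|S(X(t_j))-S(Y_j)|>1\}$ decays faster than $\e^{-Kn}$ for every fixed $K$. Working in the $S$-transformed coordinates (note $S$ is nondecreasing), the event $\{X(t_j) > z_{t_j}\}$ is implied by $\{Y_j > S^{-1}(S(z_{t_j})+1)\}$ together with the $1$-closeness event, and the $\bm{C}(b)$-based tail estimate remains valid for this mildly inflated threshold (the inflation costs only a $(1+o(1))$ factor in the exponent). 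Genuine independence of the $\{Y_j\}$ then lets the Borel--Cantelli calculation above proceed verbatim, while the $\e^{-Kn}$-small coupling errors are absorbed into the union bound. The main obstacle is precisely this bookkeeping for the unknown monotone $S$: one must verify that $S$ cannot conspire to destroy the alignment between the threshold and the lower tail estimate that drives the argument. Finally, the companion almost-sure asymptotic on $X(x)/\log\|x\|$ falls out of the dimension bound by taking $\gamma \uparrow d$, since for every $\gamma < d$ the set $\cP_X^{(\bm{C}(b))}(\gamma)$ is a.s.\ unbounded and hence contains points of arbitrarily large norm on which the threshold inequality holds.
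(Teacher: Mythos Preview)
Your proposal follows the same route as the paper's own treatment: the paper does not reprove Theorem~\ref{th:Gen:LIL:LB} in full but cites \cite[Theorem~4.7]{KKX} and, in the Remark immediately following the statement, sketches exactly the argument you outline---fix $\theta>\gamma/d$ and a finer $\delta$-skeleton inside each $\theta$-box, couple $\{X(t_j)\}$ to independent $\{Y_j\}$ via \eqref{cond:LB}, use the $\bm{C}(b)$ tail bound on each $Y_j$, multiply out by independence to get a doubly-exponential bound, union-bound over $\Pi_n(\theta)$, and invoke Borel--Cantelli and Proposition~\ref{pr:thick}. Your computation (number of candidate points $\asymp \e^{nd(\theta-\delta)}$, per-point exceedance probability $\ge \e^{-\gamma n(1+o(1))}$, and the resulting bound $\exp(-\e^{n[d(\theta-\delta)-\gamma](1+o(1))})$) matches the paper's display~\eqref{eq:lower} line for line.

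The one place your write-up is incomplete---and you flag it yourself---is the transfer step through $S$. You assert that ``the $\bm{C}(b)$-based tail estimate remains valid for this mildly inflated threshold (the inflation costs only a $(1+o(1))$ factor in the exponent),'' but the tail hypothesis is on $X$, not on $Y_j$, and one still has to bound $\P\{Y_j > S^{-1}(S(z_{t_j})+1)\}$ from below by transferring back to $X$ at the further-inflated level $S^{-1}(S(z_{t_j})+2)$. For a general increasing $S$ this second inflation need not be negligible relative to $z_{t_j}$. The paper sidesteps this by taking $S(x)=x$ in the heuristic and deferring the general case to \cite{KKX}, so your proposal is on equal footing with what the paper actually provides; just be aware that the sentence ``the inflation costs only a $(1+o(1))$ factor'' is not justified in the generality stated and is precisely the point where the cited reference does the real work.

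One minor correction: your last paragraph derives the asymptotic on $X(x)/\log\|x\|$ as a $\limsup$ statement (unboundedness of $\cP_X^{(\bm{C}(b))}(\gamma)$ for each $\gamma<d$ gives points of arbitrarily large norm where the threshold is met). That is the correct deduction; the $\liminf$ in the displayed conclusion of the theorem appears to be a typographical slip in the paper, and your reading is the intended one.
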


\begin{remark}
A rigorous proof of Theorem \ref{th:Gen:LIL:LB} is given in \cite[Theorem 4.7]{KKX}. Here we give a brief explanation how the condition \eqref{cond:LB} provides the lower bound. This explanation suggests a probability estimate that can lead to the lower bound. For simplicity, assume $S(x)=x$.  Let $x\in \Pi_n(\theta)$.  Let $m$ refer to the number of all the elements in $\Pi_n(\delta)\cap Q(x,\e^{n\theta})$, i.e.,  there exists a constant $c>1$ such that $c^{-1}\exp\left(nd(\theta-\delta)\right)\leq m \leq c\exp\left(nd(\theta-\delta)\right)$.  Thanks to \eqref{cond:LB} and \eqref{eq:gen:tail:UB}, for any $\epsilon \in (0, d\theta-d\delta-\gamma)$, we may have 
\begin{equation}
\begin{aligned}
 \P& \left\{\max_{\{y_i\}_{i=1}^m \in \Pi_n(\delta)\cap  Q\left( x,\e^{n\theta}\right)}\frac{X(y_i)}{(\log\|y_i\|)^{1/b}}<\left(\frac{\gamma}{\bm{C}(b)}\right)^{1/b} \right\}\\ \label{eq:lower}
  &\lessapprox \P\left\{ \max_{1\leq i\leq m} Y(i)<\left(\frac{\gamma n}{\bm{C}(b)}\right)^{1/b} \right\}=  \prod_{i=1}^m \P\left\{ Y(i) <\left(\frac{\gamma n}{\bm{C}(b)}\right)^{1/b}\right\}\\
&=  \prod_{i=1}^m \left(1 - \P\left\{ Y(i) >\left(\frac{\gamma n}{\bm{C}(b)}\right)^{1/b}\right\}\right) \lessapprox \prod_{i=1}^m \left(1 - \P\left\{ X(x_i) >\left(\frac{\gamma n}{\bm{C}(b)}\right)^{1/b}\right\}\right)\\
& \leq \prod_{i=1}^m \left(1-\e^{-(\gamma+\epsilon)n}\right)\leq \exp\left(-\e^{n(-\gamma-\epsilon+d\theta-d\delta)} \right).
\end{aligned}
\end{equation} Since  $\epsilon \in (0, d\theta-d\delta-\gamma)$, we obtain
\begin{align*}
&\sum_{n=0}^\infty  \P\left\{\text{There exists $x\in \Pi_n(\theta)$ such that $\max_{y\in \Pi_n(\delta)\cap Q\left( x,\e^{n\theta}\right)}\frac{X(y)}{(\log\|y\|)^{1/b}}<\left(\frac{\gamma}{\bm{C}(b)}\right)^{1/b}$  } \right\}\\
& \leq \sum_{n=1}^\infty \sum_{x \in \Pi_n(\theta)} \P \left\{\max_{\{y_i\}_{i=1}^m \in \Pi_n(\delta)\cap  Q\left( x,\e^{n\theta}\right)}\frac{X(y_i)}{(\log\|y_i\|)^{1/b}}<\left(\frac{\gamma}{\bm{C}(b)}\right)^{1/b} \right\}\\
&\leq \sum_{n=1}^\infty \exp\left(nd(1-\theta)\right) \exp\left(-\e^{n(-\gamma-\epsilon+d\theta-d\delta)} \right) <\infty.
\end{align*}
Hence,  the Borel-Cantelli lemma says that $\left\{x\in \R^d :\ \|x\|>\exp(\e),\
		\frac{X(x)}{(\log \|x\|)^{1/b}} \ge 
		\left(\frac{\gamma}{\bm{C}(b)}\right)^{1/b}\right\}$ contains a $\theta$-thick set almost surely for $\theta>(\epsilon+\gamma+d\delta)/d$. We now use Proposition \ref{pr:thick} and let $\epsilon$ and $\delta$ go to 0 to get  that the dimension is bounded below by $d-\gamma$. 
\end{remark}

\begin{remark}\label{rem:lower}
The condition \eqref{cond:LB} is called a coupling condition since we couple the random field $\{X(x_i)\}$ and $\{Y_i\}$ together. However, we may be able to loose the coupling condition since all we want is a probability estimate in \eqref{eq:lower}. Thus,  for the lower bound in Theorem \ref{th:linear}, we use a \emph{less strict} coupling argument  to show that the set defined in \eqref{eq:set:linear} contains a $\gamma/d$-thick set. More specifically, since $\{Z_t(x)\}_{x\in\R^d}$ is a stationary Gaussian random field, we can construct  Gaussian random variables $\{\tilde Z_i\}$ not from $\{Z_t(x)\}_{x\in\R^d}$, but still can get a similar probability estimate as in \eqref{eq:lower} by using some properties of Gaussian random variables such as Slepian's inequality (\cite{Slepian}) and Berman's theorem for equally correlated random variables (\cite{Berman}). This provides us much simpler computations than the one for Theorem \ref{th:pam} where we construct independent random variables by using a localization technique developed by Conus et al in \cite{CJKS}.
\end{remark}

\section{Linear stochastic heat equation with fractional Laplacian} \label{sec:linear}
Recall
\begin{equation}\label{linearFSHE1}
 \begin{aligned}
 \frac{\partial }{\partial t} Z_t(x) &= -(-\Delta)^{\alpha/2} Z_t(x) + \dot{F}_t(x), \quad t>0, \, x\in \R^d,\\
 Z_0(x)&=0, \quad x\in\R^d.
\end{aligned}
\end{equation}
It is well-known (see, e.g., \cite{FKN}) that a \emph{mild} solution to \eqref{linearFSHE1} satisfies the following integral equation:
\begin{equation}\label{eq:soln:linear}
Z_t(x)=\int_{(0,t)\times \R^d} p_{t-s}(y-x)\, F (\d s, \d y),
\end{equation}
where $p_t(x)$ is the transition density of the symmetric $\alpha$-stable process whose L\'evy exponent is given by $\psi(\xi)=\|\xi\|^{\alpha}$. This also says that  $\hat p_t(\xi)=\e^{-t\psi(\xi)}$. In addition, it is well-known that there exists two constants $0<c_1 <c_2<\infty$ such that
\begin{equation}\label{eq:density}
c_1 \left\{t^{-\alpha/d} \wedge \frac{t}{|x-y|^{d+\alpha}} \right\}\leq p_t(y-x) \leq c_2 \left\{t^{-\alpha/d} \wedge \frac{t}{|x-y|^{d+\alpha}} \right\}.
\end{equation}

We now state some lemmas which help us to get the upper and lower bounds of the dimension. 
\begin{lemma}\label{lem:gaussian} Fix $t>0$. Then,
$\{Z_t(x)\}_{x\in\R^d}$ is a centered stationary Gaussian random field with variance $\cab\, t^{(\alpha-\beta)/\alpha}$, where $\cab$ is given in Theorem \ref{th:linear}. In addition,  there exist two constants $0<c_1=c_1(\alpha,\beta,t)\leq c_2=c_2(\alpha,\beta)<\infty$  such that, for all $x,y\in \R^d$ with $\|x-y\|<1$, 
\begin{equation}\label{eq:modulus:linear}
c_1 |x-y|^{\alpha-\beta}\leq \E|Z_t(x)-Z_t(y)|^2\leq c_2 |x-y|^{\alpha-\beta} \quad \text{for $t>0$}.
\end{equation} 
On the other hand, when $\|x-y\|>2 t^{1/\alpha}$,  there exists some constant $c_3:=c_3(\alpha,\beta, t)$ such that
\begin{equation}\label{eq:cov:linear}
0\leq \Corr(Z_t(x), Z_t(y) ) \leq c_3\|x-y\|^{-\beta}.
\end{equation}

\end{lemma}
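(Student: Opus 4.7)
The plan is to regard $Z_t(x)$, given by \eqref{eq:soln:linear}, as a Wiener integral of the deterministic kernel $p_{t-s}(y-\cdot)$ against the Gaussian noise $F$, so that the field is automatically centered Gaussian. Walsh's isometry with spatial covariance $f$, combined with Plancherel and the identities $\hat p_t(\xi)=e^{-t\|\xi\|^\alpha}$, $\hat f(\xi)=\|\xi\|^{\beta-d}$, yields (up to an explicit constant determined by the Fourier normalization)
\begin{equation*}
\E[Z_t(x)Z_t(y)] \;\propto\; \int_{\R^d}\frac{1-e^{-2t\|\xi\|^\alpha}}{2\|\xi\|^{d+\alpha-\beta}}\cos(\xi\cdot(x-y))\,d\xi,
\end{equation*}
which depends only on $x-y$ and so proves stationarity. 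For the variance I would set $x=y$, pass to polar coordinates, substitute $v=2t\|\xi\|^\alpha$, and integrate $\int_0^\infty(1-e^{-v})v^{\beta/\alpha-2}dv$ by parts (boundary terms vanish because $\beta/\alpha\in(0,1)$) to recover $\alpha\Gamma(\beta/\alpha)/(\alpha-\beta)$; collecting constants matches $\bm{c}_z t^{(\alpha-\beta)/\alpha}$.

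For the incremental bound \eqref{eq:modulus:linear}, subtracting the covariance formulae at $x$ and $y$ and rescaling $\xi=\eta/\|x-y\|$ factors out $\|x-y\|^{\alpha-\beta}$ via rotational symmetry, leaving
\begin{equation*}
\|x-y\|^{\alpha-\beta}\int_{\R^d}\frac{1-e^{-2t\|\eta\|^\alpha/\|x-y\|^\alpha}}{\|\eta\|^{d+\alpha-\beta}}\bigl(1-\cos(\eta\cdot e)\bigr)d\eta,
\end{equation*}
where $e=(x-y)/\|x-y\|$. The $\eta$-integral is convergent since $1-\cos\asymp\|\eta\|^2$ tames the origin and $\alpha>\beta$ gives decay at infinity. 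The upper bound is immediate from $1-e^{-x}\le 1$ and is $t$-independent; for the lower bound, $\|x-y\|<1$ yields the monotone bound $1-e^{-2t\|\eta\|^\alpha/\|x-y\|^\alpha}\ge 1-e^{-2t\|\eta\|^\alpha}$ pointwise, and the resulting integral is strictly positive and depends only on $\alpha,\beta,t$.

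For the long-range estimate \eqref{eq:cov:linear} I would work from the Walsh representation
\begin{equation*}
\E[Z_t(x)Z_t(y)] = \int_0^t \int_{\R^{2d}} p_{t-s}(u)p_{t-s}(v)\,f(x-y-u+v)\,du\,dv\,ds \;\ge\;0,
\end{equation*}
non-negativity being immediate from $p_s,f\ge0$. Setting $\rho=\|x-y\|$, split the $(u,v)$-region into $G:=\{\|u\|\vee\|v\|\le\rho/4\}$ and $G^c$. On $G$ one has $\|x-y-u+v\|\ge\rho/2$, so $f\le c_{\beta,d}\cdot 2^\beta\rho^{-\beta}$ and the $G$-contribution is at most $2^\beta c_{\beta,d}\rho^{-\beta}\cdot t$. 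On $G^c$, after isolating the $u$-marginal by symmetry, I would combine the Fourier-positivity bound $(p_{t-s}*f)(z-u)\le(p_{t-s}*f)(0)\asymp(t-s)^{-\beta/\alpha}$ (justified because $\hat p_{t-s}\hat f\ge0$) with the heavy-tail estimate $\P(\|X_{t-s}\|>\rho/4)\lesssim(t-s)\rho^{-\alpha}$ drawn from \eqref{eq:density}; the product, integrated in $s$, is of order $t^{2-\beta/\alpha}\rho^{-\alpha}$, and $\rho>2t^{1/\alpha}$ absorbs $\rho^{-\alpha}$ into $c\,t^{-(\alpha-\beta)/\alpha}\rho^{-\beta}$. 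Dividing the total by the variance $\bm{c}_z t^{(\alpha-\beta)/\alpha}$ produces the claimed correlation bound.

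The main obstacle is this long-range estimate. Because $f$ has a non-integrable singularity at the origin that can collide with the shifted argument $x-y-u+v$, one cannot simply peel off $L^\infty(p_s)$ against $L^1(f)$, and the dyadic split $G$ versus $G^c$, the Fourier-positivity bound $(p_s*f)(\cdot)\le(p_s*f)(0)$, and the heat-kernel tail from \eqref{eq:density} each play an essential role. The other three assertions reduce to routine Fourier and scaling computations.
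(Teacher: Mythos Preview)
Your proposal is correct. The variance and stationarity computations are essentially identical to the paper's (Walsh isometry plus Plancherel). For the other two claims your routes differ from the paper's in instructive ways.

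For the incremental bound \eqref{eq:modulus:linear}, the paper keeps the $\xi$-integral unscaled and splits it into $A_1=\{\|\xi\|\le 1/\|x-y\|\}$ and $A_2=\{\|\xi\|>1/\|x-y\|\}$, bounding $1-\cos$ by a quadratic on $A_1$ and by $2$ on $A_2$; the lower bound comes from restricting $A_1$ to a half-space $\{\xi\cdot(x-y)\ge 1/2\}$. Your scaling substitution $\xi=\eta/\|x-y\|$ is cleaner: it extracts the exponent $\|x-y\|^{\alpha-\beta}$ in one step and reduces both bounds to the finiteness and positivity of a single convergent integral, with the monotonicity $1-e^{-2t\|\eta\|^\alpha/\|x-y\|^\alpha}\ge 1-e^{-2t\|\eta\|^\alpha}$ (valid for $\|x-y\|<1$) doing the work of the paper's geometric restriction. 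Both give the right dependence of $c_1$ on $t$ and the $t$-independence of $c_2$.

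For the correlation decay \eqref{eq:cov:linear}, the paper first collapses the double integral via the semigroup identity $p_s*p_s=p_{2s}$ to $\int_0^t\int_{\R^d}p_{2s}(z)f(z-(x-y))\,dz\,ds$, and then splits according to whether $\|z-(x-y)\|$ is below or above $\|x-y\|/4$; near the singularity it uses the heat-kernel tail \eqref{eq:density} on $p_{2s}$, and away from it simply bounds $f$. You instead keep the double integral, split on $\|u\|\vee\|v\|\le\rho/4$ versus its complement, and on $G^c$ invoke the Fourier-positivity inequality $(p_{t-s}*f)(\cdot)\le(p_{t-s}*f)(0)\asymp(t-s)^{-\beta/\alpha}$ together with the stable-process tail. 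The semigroup reduction is a slight economy the paper gains; your Fourier-positivity bound is a nice device the paper does not use, and it sidesteps having to localize the singularity of $f$ explicitly. Either way the $G^c$ contribution comes out of order $t^{2-\beta/\alpha}\rho^{-\alpha}$, which the hypothesis $\rho>2t^{1/\alpha}$ converts to $c(t)\rho^{-\beta}$, and dividing by the variance finishes the proof.
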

\begin{proof}
First of all, it is clear that $Z_t(x)$ is centered stationary  Gaussian. In addition, by Walsh's isometry, change of variables and the Fourier transform, we have 
\begin{equation}
\begin{aligned}
\E Z_t^2 (x) &= \int_0^t  \int_{\R^d}  \int_{\R^d}   p_s(y) p_s(z) f(y-z)\, \d y \d z \d s \\
& = \int_0^t  \int_{\R^d} p_s(y) (p_s * f) (y) \, \d y  \d s \\
& = \int_0^t \int_{\R^d} \e^{-2s\|\xi\|^\alpha} \|\xi\|^{-d+\beta} \, \d \xi \d s\\
& = \cab\, t^{(\alpha-\beta)/\alpha},
\end{aligned}
\end{equation} which shows the first part of the theorem. 
We now show \eqref{eq:modulus:linear}: Using the Fourier transform and L\'evy exponent, we have
\begin{align*}
\E|Z_t(x)-Z_t(y)|^2&=2\left\{ \E Z^2_t(x) -\E (Z_t(x)Z_t(y)) \right\}\\
&=2\left\{ \int_0^t \int_{\R^d} \e^{-2s\|\xi\|^\alpha} \|\xi\|^{-d+\beta} \left( 1- \cos (\xi \cdot (x-y)\right) \,\d \xi \d s \right\}\\
&=2\left\{ \int_{\R^d} \frac{\left(1-\e^{-2t\|\xi\|^\alpha}\right)\|\xi\|^{-d+\beta}\left(1-\cos(\xi\cdot(x-y))\right)  }{2\|\xi\|^\alpha}  \d \xi\right\}\\
&=2A_1+2A_2,
\end{align*}where
\begin{align*}
A_1&:=\int_{\|\xi\|\leq \frac{1}{\|x-y\|}} \frac{\left(1-\e^{-2t\|\xi\|^\alpha}\right)\|\xi\|^{-d+\beta}\left(1-\cos(\xi\cdot(x-y))\right)  }{2\|\xi\|^\alpha}  \d \xi \\
A_2&:=\int_{\|\xi\|> \frac{1}{\|x-y\|}} \frac{\left(1-\e^{-2t\|\xi\|^\alpha}\right)\|\xi\|^{-d+\beta}\left(1-\cos(\xi\cdot(x-y))\right)  }{2\|\xi\|^\alpha}  \d \xi.
\end{align*}
We first consider $A_2$: Since $\beta-\alpha-1<-1$,
\begin{align}
A_2 \leq \int_{\|\xi\|> \frac{1}{\|x-y\|}} \|\xi\|^{-d+\beta-\alpha}\, \d \xi = \int_{\frac{1}{\|x-y\|}}^\infty r^{\beta-\alpha-1} \, \d r = \frac{\|x-y\|^{\alpha-\beta}}{\alpha-\beta}.
\end{align}
We now consider $A_1$: Since $|\xi\cdot (x-y)|\leq \|\xi\|\|x-y\| \leq 1$ and $-1\leq 1-\alpha<\beta-\alpha+1<1$, 
\begin{align}
A_1&\leq \|x-y\|^2  \int_{\|\xi\|\leq \frac{1}{\|x-y\|}} \|\xi\|^{-d+\beta-\alpha+2}\, \d \xi =\|x-y\|^2 \int_0^{1/\|x-y\|} r^{\beta-\alpha+1} \,\d r\\
&\leq \frac{\|x-y\|^{\alpha-\beta}}{2-\alpha+\beta}.
\end{align} 
For the lower bound, since $A_2\geq 0$, it is enough to consider $A_1$ only: 
\begin{align}
A_1&\geq \int_{\left\{\|\xi\|\leq \frac{1}{\|x-y\|}\right\} \cap \left\{\frac{1}{2}\leq \xi\cdot (x-y)\right\} } \frac{\left(1-\e^{-2t\|\xi\|^\alpha}\right)\|\xi\|^{-d+\beta}\left(1-\cos(\xi\cdot(x-y))\right)  }{2\|\xi\|^\alpha}  \d \xi \\
&\geq \text{const.} \left( 1-\e^{-2t/ \|x-y\|^\alpha} \right) \|x-y\|^{\alpha-\beta}.
\end{align} We used the fact that the area of the region $\left\{\|\xi\|\leq 1/ \|x-y\|\right\} \cap\{\xi \in \R^d: 1/2 \leq \xi\cdot (x-y)\}$ is comparable to the area of the ball $\{\xi \in \R^d: \|\xi\|\leq 1/\|x-y\| \}$. Combining things together, we complete the proof of \eqref{eq:modulus:linear}. 

Lastly, we show \eqref{eq:cov:linear}: Using the change of variables and the semigroup property, we have
\begin{align}
\E (Z_t(x)Z_t(y)) &= \int_0^t  \int_{\R^d}  \int_{\R^d}   p_s(x-z) p_s(y-z') f(z-z')\, \d z \d z' \d s \\
& = \int_0^t \int_{\R^d} p_{2s}(z) f(z-(x-y)) \, \d z \d s.
\end{align} 
Let $w:=x-y$. If $\|z-w\|\leq \|w\|/4$, then $\|z\|\geq \|w\|/2 \geq t^{1/\alpha}$. Thus, by \eqref{eq:density},  we have
\begin{equation}\label{eq:a1}
\begin{aligned}
&\int_0^t \int_{\|z-w\|\leq \|w\|/4} p_{2s}(z)\|z-w\|^{-\beta} \, \d z \d s\\
&\leq c_2 \int_0^t 2s \int_{\|z-w\|\leq \|w\|/4} \|z\|^{-d-\alpha} \|z-w\|^{-\beta} \, \d z \d s\\
&\leq c_2 2^{d+\alpha+1}  \int_0^t s \|w\|^{-d-\alpha} \int_{\|z-w\|\leq \|w\|/4}   \|z-w\|^{-\beta} \, \d z \d s\\
&\leq \frac{c_2 t^2  2^{-d+\alpha+2\beta} }{d-\beta} \|w\|^{-\alpha-\beta}.
\end{aligned}
\end{equation}  On the other hand, we have
\begin{equation}\label{eq:a2}
\begin{aligned}
&\int_0^t \int_{\|z-w\|> \|w\|/4} p_{2s}(z)\|z-w\|^{-\beta} \, \d z \d s\\
&\leq 4^\beta \|w\|^{-\beta} \int_0^t \int_{\R^d} p_{2s}(z) \d z \d s\\
&\leq 4^\beta t \|w\|^{-\beta}. 
\end{aligned}
\end{equation}
Combining \eqref{eq:a1} and \eqref{eq:a2},  we complete the proof of \eqref{eq:cov:linear}.
\end{proof}

Lemma \ref{lem:gaussian} implies that we can get  the following lemma which is originally due to Qualls and Watanabe \cite[Theorem 2.1]{QW}:
\begin{lemma}[\cite{QW}, Theorem 2.1]\label{lem:linear:tail}
Let $D$ be a compact set in $\R^d$. There exists constant $H_{\alpha-\beta} \in (0,\infty)$ such that
\begin{equation}
\lim_{\lambda \to \infty} \P\left\{ \sup_{x\in D} \frac{Z_t(x)}{\cab t^{(\alpha-\beta)/2\alpha})} >\lambda \right\}\Big/  \psi(\lambda) \left(\tilde{\sigma}^{-1}(1/\lambda)\right)^{-d} = H_{\alpha-\beta},
\end{equation}where $|D|$ is the Lebesgue measure of $D$, $\psi(x)=\exp(-x^2/2)/(2\pi x)$, and $\tilde{\sigma}^2(s)=2s^{\alpha-\beta}$ for $s\geq 0$. 
\end{lemma}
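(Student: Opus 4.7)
The plan is to reduce the statement to a direct application of Qualls and Watanabe's Theorem~2.1 in \cite{QW} once we have standardized the field. Set $\tilde Z_t(x) := Z_t(x)/\sqrt{\cab\,t^{(\alpha-\beta)/\alpha}}$, which by Lemma \ref{lem:gaussian} is a centered stationary Gaussian random field on $\R^d$ with unit variance and correlation function
\begin{equation*}
r(s) \;:=\; \Corr(Z_t(0),Z_t(s)) \;=\; 1 - \frac{\E|Z_t(s)-Z_t(0)|^2}{2\,\cab\,t^{(\alpha-\beta)/\alpha}}.
\end{equation*}
The Qualls--Watanabe theorem delivers an asymptotic of the desired form as soon as (i) $1-r(s)$ is regularly varying of power order in $(0,2)$ near the origin, and (ii) $r$ decays fast enough at infinity to satisfy a Berman-type mixing condition.

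First I would verify the local behavior. From \eqref{eq:modulus:linear} we get
\begin{equation*}
1-r(s) \;\asymp\; \|s\|^{\alpha-\beta} \quad\text{as } \|s\|\to 0,
\end{equation*}
with exponent $\alpha-\beta\in(0,2)$ by the standing assumption \eqref{condition}. This identifies $\tilde\sigma^2(s)=2\,s^{\alpha-\beta}$ as the canonical incremental variance and pins down Pickands' exponent. To apply Qualls--Watanabe as stated one needs the slightly sharper asymptotic $1-r(s)=K\|s\|^{\alpha-\beta}(1+o(1))$; this can be extracted by revisiting the Fourier computation in the proof of Lemma \ref{lem:gaussian}, splitting the integrand at $\|\xi\|=1/\|s\|$ and passing to the limit via dominated convergence on each piece to isolate the leading constant explicitly.

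Second, I would check the decay at infinity. By \eqref{eq:cov:linear},
\begin{equation*}
0\le r(s) \;\le\; c_3\|s\|^{-\beta} \quad\text{for all } \|s\|>2t^{1/\alpha},
\end{equation*}
so $r(s)\log\|s\|=O(\|s\|^{-\beta}\log\|s\|)\to 0$ as $\|s\|\to\infty$, which is precisely Berman's mixing condition in the strong form required by \cite{QW}.

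With (i) and (ii) in hand, Qualls--Watanabe's Theorem~2.1 immediately yields the claimed limit, with $H_{\alpha-\beta}\in(0,\infty)$ being the Pickands constant of index $(\alpha-\beta)/2$; the form of $\psi(\lambda)(\tilde\sigma^{-1}(1/\lambda))^{-d}$ and the Lebesgue-measure factor $|D|$ come directly from the Pickands double-sum discretization of $D$. The only real obstacle is bookkeeping: matching the normalization conventions of \cite{QW} to the constants in our definition of $\tilde\sigma$ and of the standardization of $Z_t$, and upgrading the two-sided bound in \eqref{eq:modulus:linear} to a genuine asymptotic equivalence. No new probabilistic input is needed beyond Lemma \ref{lem:gaussian}.
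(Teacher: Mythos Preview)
Your proposal is correct and matches the paper's approach exactly: the paper gives no proof at all for this lemma, simply stating that ``Lemma \ref{lem:gaussian} implies that we can get the following lemma which is originally due to Qualls and Watanabe \cite[Theorem 2.1]{QW}.'' Your write-up is in fact more careful than the paper, since you flag that the two-sided bound \eqref{eq:modulus:linear} must be upgraded to a genuine asymptotic $1-r(s)\sim K\|s\|^{\alpha-\beta}$ before Qualls--Watanabe applies in its stated form.
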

\begin{proof}[Proof of the upper bound in Theorem \ref{th:linear}]
First, we note that since $\{Z_t(x)\}_{x\in\R^d}$ is a Gaussian random field with variance $\cab\, t^{(\alpha-\beta)/\alpha}$ by Lemma \ref{lem:gaussian},  if we let $X(x):=Z_t(x)/ \sqrt{\cab\, t^{(\alpha-\beta)/\alpha}}$, then  it is easy to see that $b=2$ and $\bm{c}(2)=\bm{C}(2)=1/2$ in Section \ref{sec:general}. In addition, one can easily check that the condition \eqref{cond:UB} is satisfied by Lemma \ref{lem:linear:tail}, which gives us the upper bound we want. Here, we explain how the condition \eqref{cond:UB} implies the upper bound, which helps readers understand why the condition \eqref{cond:UB} is quite a natural condition. 

Recall 
\[ \cP_{Z_t}(\gamma):=\left\{ x\in\R^d: \, \|x\|>\e, Z_t(x)\geq \left(2\, \cab\, t^{(\alpha-\beta)/\alpha}\, \gamma\, \log\|x\| \right)^{1/2} \right\}. \] We show that the macroscopic Hausdorff dimension of $\cP_{Z_t}(\gamma)$ is bounded above by $d-\gamma$.  
Define, for all $n\geq 0$,  
\begin{align*}
&\bar\cP_{Z_t}^{(n)}(\gamma):=\left\{ x\in \cS_n: \, \|x\|>\e, \sup_{y\in Q(x,1)} Z_t(y)\geq \left(2\, \cab\, t^{(\alpha-\beta)/\alpha}\, \gamma\, n \right)^{1/2} \right\}, \\
&\bar\cP_{Z_t}(\gamma):=\cup_{n=0}^\infty \, \bar\cP_Z^{(n)}(\gamma).
\end{align*}
Since $\cP_{Z_t}(\gamma) \subset \bar\cP_{Z_t}(\gamma)$,  covering $\bar\cP_{Z_t} \cap \cS_n$ by upright boxes $Q(x,1)$ for $x\in \cS_n\cap \Z^d$, we get that
\begin{equation}\label{eq:prob-est}
\begin{aligned}
\E\, \nu_\rho^n\left(\cP_{Z_t} (\gamma)\right) &\leq \E \sum_{x\in \cS_n\cap \Z^d} \left(\frac{1}{\e^n}\right)^\rho \mathbb{1}_{\{x\in \bar\cP_{Z_t}(\gamma)\}}(x)\\
&\leq \text{const.}\times\e^{n(d-\rho)}\sup_{x\in \cS_n} \P\left\{\sup_{y\in Q(x,1)} Z_t(y) \geq \sqrt{2\cab t^{(\alpha-\beta)/\beta} \gamma \log n } \right\}\\
&\leq \text{const.}\times \e^{n\left(d-\gamma-\rho+o(1)\right)} \quad \text{as $n\to \infty$}.
\end{aligned}
\end{equation} For the last inequality above, we used Lemma \ref{lem:linear:tail}.  Therefore, for all $\rho>d-\gamma$, we have
$\sum_{n=1}^\infty \E\, \nu_\rho^n\left(\cP_{Z_t} (\gamma)\right) <\infty$, 
which implies that
\[ \Dimh \cP_{Z_t}(\gamma) \leq \rho, \quad a.s..\]
Since $\rho>d-\gamma$ is arbitrary, we get the upper bound of the dimension. For \eqref{eq:sup:linear}, it can be easily derived from a similar probability estimate as in \eqref{eq:prob-est} and the Borel-Cantelli lemma.
\end{proof}

We now consider the lower bound. 
\begin{proof}[Proof of the lower bound in Theorem \ref{th:linear}]\label{proof:lower}
We first construct independent random variables which can provide a probability estimate as in \eqref{eq:lower}. 

Let $x\in \Pi_n(\theta)$. Let $\{y_i\}_{i=1}^m$'s be the set of the elements in $\Pi_n(\delta)\cap Q(x,\e^{n\theta})$. Thus, there exists some constant $c>1$ such that $c^{-1}\e^{n(\theta-\delta)}\leq m\leq c\e^{n(\theta-\delta)}$, i.e., $m\asymp \e^{n(\theta-\delta)}$. Let $C:=(c_{i,j})_{m\times m}$ be a matrix whose elements are given by $c_{i,i}=1$ and $c_{i,j}=r_n$ for all $1\leq i\neq j \leq m$, where
\[ r_n:=\max_{\substack{y_i\neq y_j\\ y_i, y_j \in \Pi_n(\delta)\cap Q(x,\e^{n\theta})}} \Corr(Z_t(y_i), Z_t(y_j)), \quad \text{for a fixed $t>0$.}\] 
By \eqref{eq:cov:linear}, we have $0< r_n \leq c_3\, \exp\left(-\beta\delta n\right)$.
Since the matrix $C$ is positive definite, we can construct Gaussian random variables $\{\tilde Z_i\}_{i=1}^m$ whose covariance is given by the matrix $C$. In addition, since these random variables are equally correlated, by Theorem in Section 2 in \cite{Berman}, there exist centered independent Gaussian random variables $Y, U_1, U_2, \dots, U_m$ such that 
\begin{align*}
&\tilde Z_i=Y+U_i, \\
\E Y^2=r_n, &\quad \E U_i^2=1-r_n, \,\, 1\leq i\leq m.
\end{align*}
Therefore, using the tail probability estimate of a centered Gaussian random variable, we obtain that for all large $n\geq 1$, 
\begin{equation}
\begin{aligned}\label{linear:max:ineq}
\P\left\{\max_{1\leq i\leq m} \tilde Z_i \leq \sqrt{2\gamma n} \right\} & \leq \P\left\{\max_{1\leq i\leq m} U_i \leq \sqrt{2\gamma n+2} \right\} +\P\left\{Y \geq \sqrt{2\gamma n+1}-\sqrt{2\gamma n} \right\} \\
& \leq \left(\P\left\{U_i\leq \sqrt{2\gamma n+2} \right\} \right)^m+\P\left\{Y \geq \frac{1}{\sqrt{9\gamma n}} \right\} \\
&\leq \left(1-\exp\left( -\frac{\gamma n}{1-r_n}\right) \right)^m + \exp\left(-\frac{1}{18\gamma n\, r_n}\right)\\
&\leq \exp\left[-m\times \e^{-\gamma n +o(1)}\right]+ \exp\left(-\frac{c_3\e^{\beta\delta n}}{18\gamma n}\right)\\
&\leq \exp\left[-\text{const.}\times \e^{n(d\theta-d\delta-\gamma) +o(1)}\right]+ \exp\left(-\frac{c_3\e^{\beta\delta n}}{18\gamma n}\right).
\end{aligned}
\end{equation} Here, we used the fact that $m\asymp \e^{nd(\theta-\delta)}$ and $r_n\leq c_3\exp(-\beta\delta n)$. We also note that the const. in the above inequality does not depend on $n$. 

Let us now get a probability estimate as in \eqref{eq:lower}. First, we note that from the construction     of Gaussian random variables $\{\tilde Z_i\}_{i=1}^m$, we have $\Corr(\tilde Z_i, \tilde Z_j) \geq \Corr(Z_t(y_i), Z_t(y_j))$ for all $1\leq i, j \leq m$. Thus,  Slepian's inequality \cite{Slepian} says that for any $\lambda>0$ 
\begin{equation}\label{eq:slepian}
\P\left\{\max_{\{y_i\}_{i=1}^m \in \Pi_n(\delta)\cap  Q\left( x,\e^{n\theta}\right)}\frac{Z_t(y_i)}{ \sqrt{ \cab\, t^{(\alpha-\beta)/\alpha}} } \leq \lambda \right\} \leq  \P\left\{\max_{1\leq i\leq m} \tilde Z_i \leq \lambda \right\}. 
\end{equation}
Therefore, using \eqref{linear:max:ineq} and \eqref{eq:slepian},  we obtain 
\begin{equation*}
\begin{aligned}
 &\P \left\{\max_{\{y_i\}_{i=1}^m \in \Pi_n(\delta)\cap  Q\left( x,\e^{n\theta}\right)}\frac{Z_t(y_i)}{(\log\|y_i\|)^{1/2}}< \sqrt{2 \cab\, t^{(\alpha-\beta)/\alpha} \, \gamma}  \right\}  \\ 
&\leq  \P \left\{\max_{\{y_i\}_{i=1}^m \in \Pi_n(\delta)\cap  Q\left( x,\e^{n\theta}\right)}\frac{Z_t(y_i)}{ \sqrt{ \cab\, t^{(\alpha-\beta)/\alpha}} }< \sqrt{2 \gamma n}  \right\}  \\ 
&\leq \exp\left[-\text{const.}\times \e^{n(d\theta-d\delta-\gamma) +o(1)}\right]+ \exp\left(-\frac{c_2\e^{\beta\delta n}}{18\gamma n}\right).
 \end{aligned} 
\end{equation*}
Now we have the same procedure as in the proof of Theorem \ref{th:Gen:LIL:LB}, i.e., 
\begin{align*}
&\sum_{n=0}^\infty \sum_{x\in \Pi_n(\theta)} \P \left\{\max_{\{y_i\}_{i=1}^m \in \Pi_n(\delta)\cap  Q\left( x,\e^{n\theta}\right)}\frac{Z_t(y_i)}{(\log\|y_i\|)^{1/2}}< \sqrt{2 \cab\, t^{(\alpha-\beta)/\alpha} \, \gamma}  \right\}  \\ 
&\leq \sum_{n=0}^\infty \exp(nd(1-\theta)) \left(\exp\left[-\text{const.}\times \e^{n(d\theta-d\delta-\gamma) +o(1)}\right]+ \exp\left(-\frac{c_2\e^{\beta\delta n}}{18\gamma n}\right)
 \right)\\
&< \infty 
\end{align*} as long as $d\theta -d\delta-\gamma>0$, equivalently, $\theta> \gamma/d + \delta$. 
By the Borel-Cantelli lemma, Proposition \ref{pr:thick}, and letting $\delta$ to 0, we get the desired lower bound.
\end{proof}

\section{Parabolic Anderson model for fractional Laplacian}\label{sec:FSHE}

\subsection{Tail probabilities  and the upper bound in Theorem \ref{th:pam}}

Let us now consider the following parabolic Anderson model for fractional Laplacian:
\begin{equation}
\begin{aligned}\label{eq:pam}
\frac{\partial  }{\partial t} u_t(x)&= -(-\Delta)^{\alpha/2}\, u_t(x)+u_t(x) \dot{F}(t,x), \quad t>0, x\in \R^d,\\
u_0(x)&=1, \quad x\in \R^d.
\end{aligned}
\end{equation} 
Throughout this section, we define $\|X\|_k:=\{\E|X|^k\}^{1/k}$ for a random variable $X$. Let us first consider the high moment asymptotics which lead to the asymptotic tail probability.

\begin{lemma}\label{lem:moment}
There exists constants $0<c_*=c_*(\alpha,\beta, d) \leq c^*=c^*(\alpha,\beta,d) < \infty$ such that
\begin{equation}
0< c_*t \leq \liminf_{k\to \infty} \frac{\log \E[u_t(x)]^k}{k^{(2\alpha-\beta)/(\alpha-\beta)}} \leq \limsup_{k\to \infty} \frac{\log \E[u_t(x)]^k}{k^{(2\alpha-\beta)/(\alpha-\beta)}} \leq c^* t <\infty
\end{equation} for all $x\in\R^d$. 
\end{lemma}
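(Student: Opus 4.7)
I would split the proof into the upper and lower moment bounds, treated by separate techniques. The upper bound will come from a Walsh--BDG Volterra inequality for $N_k(s):=\|u_s(0)\|_k$; the lower bound from the Feynman--Kac moment representation combined with a small-ball estimate that confines all $k$ driving stable processes to a common ball. In both arguments the exponent $(2\alpha-\beta)/(\alpha-\beta)$ emerges by balancing the scalings $X_{cs}\stackrel{d}{=}c^{1/\alpha}X_s$ of the stable process and $f(cx)=c^{-\beta}f(x)$ of the Riesz kernel.

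\textbf{Upper bound.}
Starting from the mild formulation
\[
u_t(x)=1+\int_0^t\!\!\int_{\R^d}p_{t-s}(y-x)u_s(y)\,F(\d s,\d y),
\]
I would apply Walsh's BDG inequality together with spatial stationarity (so that $N_k(s)$ depends only on $s$) to obtain
\[
N_k(t)^2\le 2+c\,k\int_0^t\!\left(\int_{\R^d}\hat p_{t-s}(\xi)^2\,\hat f(\xi)\,\d\xi\right)N_k(s)^2\,\d s.
\]
With $\hat p_s(\xi)=\e^{-s\|\xi\|^\alpha}$ and $\hat f(\xi)=\|\xi\|^{-d+\beta}$, the bracketed integral equals $C(t-s)^{-\beta/\alpha}$ after the substitution $\xi=(t-s)^{-1/\alpha}\eta$; it is finite because $\beta<\alpha\wedge d$. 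A fractional Gronwall lemma applied to the resulting Volterra inequality then gives $N_k(t)\le A\exp(Bk^{\alpha/(\alpha-\beta)}t/2)$ with $A,B$ depending only on $\alpha,\beta,d$. Raising to the $k$-th power yields $\log\E[u_t(x)]^k\le c^*\,t\,k^{(2\alpha-\beta)/(\alpha-\beta)}(1+o(1))$.

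\textbf{Lower bound.}
For the lower bound I would use the Feynman--Kac moment formula, valid for $u_0\equiv 1$:
\[
\E[u_t(x)]^k=\E\!\left[\exp\!\Big(\sum_{1\le i<j\le k}\int_0^t f\big(X^{(i)}_s-X^{(j)}_s\big)\,\d s\Big)\right],
\]
where $X^{(1)},\dots,X^{(k)}$ are independent symmetric $\alpha$-stable processes. I then restrict this expectation to the event $A_r:=\{\max_{i\le k}\sup_{s\le t}\|X^{(i)}_s\|\le r\}$; on $A_r$ one has $\|X^{(i)}_s-X^{(j)}_s\|\le 2r$, so $f\ge c_{\beta,d}(2r)^{-\beta}$ and the Feynman--Kac exponent is at least $\binom{k}{2}c_{\beta,d}(2r)^{-\beta}t$. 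A classical small-ball bound for the stable process, $\P(\sup_{s\le t}\|X_s\|\le r)\ge\exp(-Ct/r^\alpha)$ (obtained from the scaling identity $\P(\sup_{s\le t}\|X_s\|\le r)=\P(\sup_{s\le t/r^\alpha}\|X_s\|\le 1)$ and the principal Dirichlet eigenvalue of $(-\Delta)^{\alpha/2}$ on the unit ball), combined with independence, yields $\P(A_r)\ge\exp(-Ckt/r^\alpha)$. Choosing $r=\lambda k^{-1/(\alpha-\beta)}$ makes both terms of order $k^{(2\alpha-\beta)/(\alpha-\beta)}$, and since $\alpha>\beta$ I can take $\lambda$ large enough that $c_{\beta,d}\lambda^{-\beta}/2^{\beta+1}-C\lambda^{-\alpha}>0$, producing $\log\E[u_t(x)]^k\ge c_*\,t\,k^{(2\alpha-\beta)/(\alpha-\beta)}$ with $c_*>0$.

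\textbf{Main obstacle.}
I expect the subtlest step to be the small-ball estimate, where I need $\P(A_r)\ge\exp(-Ckt/r^\alpha)$ with a constant $C$ that is uniform over the relevant regime $r=\lambda k^{-1/(\alpha-\beta)}\to 0$ as $k\to\infty$; the argument must be organized so that the exponential decay rate carries no hidden $r$-dependence or $k$-dependence. On the upper side, the fractional Gronwall step is routine but one must track that the constant in front of the $k^{\alpha/(\alpha-\beta)}$ factor depends only on $\alpha,\beta,d$ and not on $t$. Apart from these points the remainder is scaling, Plancherel, and an optimization in $\lambda$.
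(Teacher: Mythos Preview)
Your proposal is correct and follows essentially the same route as the paper: the upper bound via the BDG--Volterra inequality with kernel $(t-s)^{-\beta/\alpha}$ (which the paper handles by the exponentially weighted norm $\mathcal{N}_{\gamma,k}(u)=\sup_{t,x}\e^{-\gamma t}\|u_t(x)\|_k^2$ with $\gamma\asymp k^{\alpha/(\alpha-\beta)}$, equivalent to your fractional Gronwall step), and the lower bound via the Feynman--Kac moment formula restricted to the small-ball event with radius $\asymp k^{-1/(\alpha-\beta)}$. Your identified obstacle is exactly the right point of care; the paper dispatches it by citing the small-ball estimate from Bertoin, \emph{L\'evy Processes}, Chapter~8, Proposition~3, which provides the required uniformity of the constant in $\P(\sup_{s\le t}\|X_s\|\le r)\ge\exp(-Ctr^{-\alpha})$.
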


\begin{proof}
We first consider the upper bound. Let $k\geq 2$.  By the Burkholder-Davis-Gundy inequality (\cite[Theorem B.1]{cbms}), we have
\begin{equation}\label{eq:BDG}
\begin{aligned}
&\left\|\int_{(0,t)\times \R^d} p_{t-s}(y-x) u_s(y) F(\d s \d y)\right\|_k^k \\
&\leq (4k)^{k/2} \left\| \int_0^t \int_{\R^d} \int_{\R^d} p_{t-s}(y-x) p_{t-s}(y'-x) u_s(y)u_s(y')f(y-y')\d y \d y' \d s  \right\|^{k/2}_{k/2} \\
& \leq (4k)^{k/2} \left\{\int_0^t \int_{\R^d}\int_{\R^d} p_{t-s}(y-x)p_{t-s}(y'-x) \|u_s(y)u_s(y')\|_{k/2} f(y-y') \d y \d y' \d s \right\}^{k/2}\\
& \leq (4k)^{k/2} \left\{\int_0^t  \sup_{x\in \R^d}\|u_s(x)\|_k^2 \int_{\R^d}\int_{\R^d} p_{t-s}(y-x)p_{t-s}(y'-x) f(y-y') \d y \d y' \d s \right\}^{k/2}
\end{aligned}\end{equation} In the last inequalities above, we used Minkowski's integral inequality and Cauchy-Schwartz inequality. Thus, using the Fourier transform, we have
\begin{equation}\label{eq:BDG1}
\begin{aligned}
\|u_t(x)\|_k^2 
&\leq 2\left\{1+4k \int_0^t \sup_{x}\|u_s(x)\|_k^2 \int_{\R^d}\int_{\R^d} p_{t-s}(y) p_{t-s}(y') f(y-y') \d y \d y' \d s\right\}\\
&\leq 2\left\{1+4k\int_0^t \sup_{x}\|u_s(x)\|_k^2 \int_{\R^d} |\hat{p}_{t-s}(\xi)|^2  \hat{f}(\xi) \d \xi\d s \right\}\\
&\leq 2\left\{1+4k\int_0^t \sup_{x}\|u_s(x)\|_k^2 \int_{\R^d} \e^{-2(t-s)\|\xi\|^\alpha} \|\xi\|^{-d+\beta} \d \xi \d s\right\}\\
&\leq 2\left\{1+2^{(2-\beta)/\alpha}\, \Gamma(\beta/\alpha) k \int_0^t \frac{\sup_{x} \|u_s(x)\|_k^2}{(t-s)^{\beta/\alpha}} \d s \right\}.
\end{aligned}
\end{equation}
For any random field $\Phi_t(x)$, any $\gamma>0$ and $k\in [2,\infty)$, we  define, as in \cite[(5.4)]{cbms},
\begin{equation}\label{norm}
\mathcal{N}_{\gamma, k}(\Phi):=\sup_{t\geq 0}\sup_{x\in\R^d} \e^{-\gamma t} \|\Phi_t(x)\|_k^2.
\end{equation}
Using \eqref{norm}, we obtain
\begin{align*}
\mathcal{N}_{\gamma,k}(u) &\leq 2\left\{1+ 2^{(2-\beta)/\alpha}\, \Gamma(\beta/\alpha)\, k\,  \mathcal{N}_{\gamma, k}(u) \int_0^\infty \e^{-\gamma s} s^{-\beta/\alpha} \d s \right\}\\ 
&\leq 2+\bar c^*\, k \, \gamma^{(\beta-\alpha)/\alpha} \mathcal{N}_{\gamma, k}(u),
\end{align*} where $\bar c^*:=2^{(3-\beta)/\alpha}\Gamma(\beta/\alpha) \Gamma(1-\beta/\alpha)$.
We now choose $\gamma=\left( 2\bar c^* k \right)^{\alpha/(\alpha-\beta)}$ to obtain 
\begin{equation}\label{eq:upper-moment}
 \sup_{x\in\R^d} \|u_t(x)\|_k^2 \leq 4\e^{t(2\bar c^*k)^{\alpha/(\alpha-\beta)}}, \quad t>0. 
 \end{equation} 
This shows the upper bound where $c^*:=2^{-1}(2\bar c^*)^{\alpha/(\alpha-\beta)}$. Note that the bound in \eqref{eq:upper-moment} is for all $k\geq 2$ and it will be used later on. 

Let us now consider the lower bound. We use the Feynman-Kac formula for the moment (see \cite{Conus, HuNualart}), i.e., 
\begin{align*}
\E [u_t(x)]^k &= \E_x \left[ \exp\left(\sum_{\substack{1\leq i, j, \leq k \\ i\neq j }} \int_0^t f \left( X_s^{(i)}-X_s^{(j)} \right) \d s \right) \right]\\
&=\E_x \left[ \exp\left(\sum_{\substack{1\leq i, j, \leq k \\ i\neq j }} \int_0^t \frac{c_{\beta, d}}{\left\|X_s^{(i)}-X_s^{(j)}\right\|^\beta} \d s \right) \right],
\end{align*}where the processes $\{X^{(i)}\}_{i=1}^k$ are $k$ independent copies of a symmetric stable process whose generator is $-(-\Delta)^{\alpha/2}$ and $\E_x$ is the expectation with respect to the law of these processes conditioned on $X^{(i)}=x$ for $1\leq i\leq k$. 

Let $A_\epsilon(t):=\left\{ \omega \in \Omega: \max_{1\leq l\leq k} \sup_{0\leq s\leq t}\|X^{(l)}_s\| \leq \epsilon \right\}$ for $\epsilon>0$. Then, Proposition 3 in Chapter 8 of \cite{Bertoin} tells us that there exists some constant $c>0$ such that for all small $\epsilon>0$ 
\begin{equation}
\P(A_\epsilon(t))= \left(\P\left\{\sup_{0\leq s\leq t}\|X^{(1)}_s\| \leq \epsilon \right\}\right)^k \geq \exp\left(-ck\epsilon^{-\alpha} t\right).
\end{equation} 
Thus, we have
\begin{align*}
\E[u_t(x)]^k &\geq \exp\left[ c_{\beta,d}\, k(k-1)t(2\epsilon)^{-\beta} \right]  \E_x \mathds{1}_{\left\{\max_{1\leq l\leq k} \sup_{0\leq s\leq t}\|X^{(l)}_s\| \leq \epsilon\right\}}\\
&\geq \exp\left[c_{\beta,d}\,  k(k-1)t(2\epsilon)^{-\beta} -ck\epsilon^{-\alpha} t \right].
\end{align*} 
We now maximize the last term in the above inequality over $\epsilon$ to get the lower bound of the moment, i.e., 
\begin{equation}
E[u_t(x)]^k \geq \exp\left( c_*k^{(2\alpha-\beta)/(\alpha-\beta)} t\right),
\end{equation} for some constant $c_*:=c_*(\alpha,\beta, d)$ and for all large $k\geq 1$ (the maximum occurs at $\epsilon\approx k^{-1/(\alpha-\beta)}$). 
This completes the proof. 
\end{proof}

We can now get some estimates of the asymptotic tail probability:
\begin{lemma}
Let $t>0$. Then, uniformly for all $x\in\R^d$, we have
\begin{align}
 \liminf_{z \to \infty} \frac{\log \P\{ \log u_t(x) \geq z\} }{z^{(2\alpha-\beta)/\alpha}} & \geq -{\bm{C}} t^{(\beta-\alpha)/\alpha}, \\
  \limsup_{z \to \infty}\frac{ \log \P\{ \log u_t(x) \geq z\}}{z^{(2\alpha-\beta)/\alpha}} &\leq -{\bm{c}}t^{(\beta-\alpha)/\alpha}, \label{eq:tail:upper}
\end{align}where
\begin{align*}
{\bm{c}}&:=\left(\frac{\alpha-\beta}{2\alpha-\beta}\right)^{(\alpha-\beta)/\alpha}\left[1- \left(\frac{\alpha-\beta}{2\alpha-\beta}\right)^{(2\alpha-\beta)/(\alpha-\beta)}\right](c^*)^{-(\alpha-\beta)/\alpha}, \\
{\bm{C}}&:=\left(\frac{\alpha-\beta}{2\alpha-\beta}\right)^{(\alpha-\beta)/\alpha}\left[1- \left(\frac{\alpha-\beta}{2\alpha-\beta}\right)^{(2\alpha-\beta)/(\alpha-\beta)}\right](c_*)^{-(\alpha-\beta)/\alpha} 
\end{align*} wherein $c^*$ and $c_*$ are the constants in Lemma \ref{lem:moment}
\end{lemma}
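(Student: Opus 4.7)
My plan is to convert the $L^k$-moment estimates of Lemma~\ref{lem:moment} into exponential tail bounds and then optimize the moment index $k$ as a function of $z$. Set $p := (2\alpha-\beta)/(\alpha-\beta)$, so that $p/(p-1) = (2\alpha-\beta)/\alpha$ and $1/(p-1) = (\alpha-\beta)/\alpha$.

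For the upper tail bound \eqref{eq:tail:upper}, I would apply Markov's inequality to $u_t(x)^k$:
\[
\P\{\log u_t(x) \geq z\} = \P\{u_t(x)^k \geq \e^{kz}\} \leq \e^{-kz}\, \E u_t(x)^k.
\]
Lemma~\ref{lem:moment} gives $\log \E u_t(x)^k \leq c^* t k^p (1+o(1))$ as $k \to \infty$, so $\log \P\{\log u_t(x) \geq z\} \leq -kz + c^* t k^p(1+o(1))$. A routine Chernoff optimization in $k > 0$ (first-order condition $c^* t p k^{p-1} = z$, hence $k^\star = (z/(p c^* t))^{1/(p-1)}$) substitutes back to
\[
\log \P\{\log u_t(x) \geq z\} \leq -\bigl(1 - \tfrac{1}{p}\bigr)(p c^* t)^{-1/(p-1)}\, z^{p/(p-1)}(1+o(1)),
\]
and rearranging the algebra in $\alpha, \beta$ identifies the coefficient with $\bm c\, t^{(\beta-\alpha)/\alpha}$.

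For the liminf lower bound, I would use a Paley--Zygmund-style second-moment argument. Splitting $\E u_t(x)^k$ according to $\{u_t(x) < \e^z\}$ versus $\{u_t(x) \geq \e^z\}$, bounding the first piece by $\e^{kz}$, and applying Cauchy--Schwarz to the second,
\[
\P\{u_t(x) \geq \e^z\} \geq \frac{\bigl(\E u_t(x)^k - \e^{kz}\bigr)_+^2}{\E u_t(x)^{2k}}.
\]
I would then insert the \emph{lower} moment bound $\E u_t(x)^k \geq \exp(c_* t k^p)$ from Lemma~\ref{lem:moment} in the numerator and the \emph{upper} bound $\E u_t(x)^{2k} \leq \exp(c^* t(2k)^p(1+o(1)))$ in the denominator, choose $k = (1+\varepsilon)(z/(c_* t))^{1/(p-1)}$ with $\varepsilon > 0$ so that $\e^{kz} \leq \tfrac{1}{2} \E u_t(x)^k$ for $z$ large, take logarithms, substitute this $k$, and finally let $\varepsilon \downarrow 0$. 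This produces a lower bound $\log \P\{\log u_t(x) \geq z\} \geq -D\, t^{-1/(p-1)}\, z^{p/(p-1)}(1+o(1))$ which, after algebraic simplification, matches the desired liminf with constant $\bm C$.

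The main technical obstacle lies in the lower-tail step. The Cauchy--Schwarz bound introduces a factor $\E u_t(x)^{2k}/(\E u_t(x)^k)^2$ whose log grows like $(c^* 2^p - 2 c_*) t k^p$, with a strictly larger leading coefficient than $c_* t k^p$ when $p > 1$. To obtain a coefficient depending only on $c_*$ as in the stated $\bm C$, one must either choose $k$ delicately so that the $c^*$-contribution is absorbed into the $(1+o(1))$ correction, or replace Cauchy--Schwarz by a higher-moment Paley--Zygmund estimate using $\E u_t(x)^{k+j}$ for slowly growing $j$ and exploiting the convexity of $k \mapsto \log \E u_t(x)^k$. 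The upper-tail step, by contrast, is a routine Cram\'er--Chernoff / Legendre-transform computation.
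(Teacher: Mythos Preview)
Your upper-bound argument via Markov's inequality and Chernoff optimization over $k$ is correct and is exactly the upper-bound half of the G\"artner--Ellis machinery the paper invokes (following Chen). No issue there.

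The lower bound, however, has a genuine gap that you yourself flag but do not close. With the Paley--Zygmund/Cauchy--Schwarz bound
\[
\log\P\{\log u_t(x)\ge z\}\ \ge\ 2\Lambda(k)-\Lambda(2k)-O(1),
\qquad \Lambda(k):=\log\E u_t(x)^k,
\]
and the choice $k\sim\bigl(z/(c_*t)\bigr)^{1/(p-1)}$, the right side is
$-\bigl(2^{p}c^{*}-2c_*\bigr)\,t\,k^{p}(1+o(1))$, so the resulting constant in front of $t^{(\beta-\alpha)/\alpha}z^{(2\alpha-\beta)/\alpha}$ is $(2^{p}c^{*}-2c_*)\,c_*^{-p/(p-1)}$, which depends on $c^{*}$ and does \emph{not} simplify to the stated $\bm C=(1/p)^{1/(p-1)}\bigl[1-(1/p)^{p}\bigr]c_*^{-1/(p-1)}$. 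Your sentence ``after algebraic simplification, matches the desired liminf with constant $\bm C$'' is therefore incorrect, and neither of your proposed fixes (delicate choice of $k$, or H\"older with slowly growing $j$) is carried out; in fact the first cannot work because the $c^{*}$-term enters at the same order $k^{p}$ as everything else, not as a lower-order correction.

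The paper takes a different route for the liminf: it cites Chen's proof, which obtains the tail lower bound from the moment asymptotics via the G\"artner--Ellis theorem. The mechanism there is an exponential change of measure (tilting by $u_t^k/\E u_t^k$), under which $\log u_t(x)$ concentrates near $\Lambda'(k)$; choosing $k$ so that $\Lambda'(k)\approx z$ and reverting yields $\log\P\{\log u_t(x)\ge z\}\gtrsim \Lambda(k)-kz$ with the Legendre-dual constant, which depends only on the lower growth rate $c_*$. That decoupling of $c_*$ from $c^{*}$ is precisely what the second-moment method cannot deliver, and it is the missing idea in your proposal.
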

\begin{proof}
In \cite[Theorem 5.4]{Chen}, Chen considers the asymptotic tail probability of the solution to \eqref{eq:pam} when $\alpha=2$. He obtained the asymptotic tail probability from the moment asymptotics by using the G\"{a}rtner-Ellis theorem. Since there is no role of Laplacian ($\alpha=2$ in our case \eqref{eq:pam}) in the proof of \cite[Theorem 5.4]{Chen} and we also have the moment asymptotics by Lemma \ref{lem:moment},  we can follow almost exactly the proof of \cite[Theorem 5.4]{Chen}. Thus, we we skip the proof. We just  note that the constants ${\bm{c}} t^{(\alpha-\beta)/\alpha}$ and ${\bm{C}}t^{(\alpha-\beta)/\alpha}$ can be obtained from maximizing the function   $g(k):= k-ctk^{(2\alpha-\beta)/(\alpha-\beta)}$ where $c$ can be $c^*$ or $c_*$.  
\end{proof}

\begin{proof}[Proof of the upper bound in Theorem \ref{th:pam}]
We will verify the condition \eqref{cond:UB} given in Theorem \ref{th:upper:general}. 

Let $q_t(x,y;z):=p_t(x-z)-p_t(y-z)$.   Since we have
\begin{align*}
u_t(x)-u_t(y)=\int_{(0,t)\times \R^d} q_{t-s}(x,y;z) u_s(z) F(\d s \, \d z),
\end{align*}
following the same computations as in \eqref{eq:BDG} and \eqref{eq:BDG1}, and also by \eqref{eq:upper-moment} (after relabeling $c^*$ in \eqref{eq:upper-moment}), we obtain 
\begin{align*}
&\|u_t(x)-u_t(y)\|_k^2 \leq 4k \int_0^t \sup_{x\in\R^d}\|u_s(x)\|_k^2 \int_{\R^d} \int_{\R^d} q_{t-s}(x,y;z)q_{t-s}(x,y,z') f(z-z') \d z \d z' \d s\\
&\qquad \qquad\leq 4k \int_0^t 4\exp\left(2c^*s   k^{\alpha/(\alpha-\beta)}\right)  \int_{\R^d} \int_{\R^d} q_{t-s}(x,y;z)q_{t-s}(x,y,z') f(z-z') \d z \d z' \d s.
\end{align*}
Here, we observe that 
\[
 \int_{\R^d} \int_{\R^d} q_{t-s}(x,y;z)q_{t-s}(x,y,z') f(z-z') \d z \d z' \d s=\E|Z_{t-s}(x)-Z_{t-s}(y)|^2 \leq c_2\|x-y\|^{\alpha-\beta},
\]where the constant $c_2$ does not depend on $t-s$ by Lemma \ref{lem:gaussian}.  Hence, for all large enough $k\geq 2$, there exists some contant $\bar{c} > c^*$ such that
\begin{equation*}
\|u_t(x)-u_t(y)\|_k^k \leq \exp\left(\bar{c} t k^{(2\alpha-\beta)/(\alpha-\beta)} \right) \|x-y\|^{(\alpha-\beta)k/2}.
\end{equation*} This and a quantitative form of the Kolmogorov continuity theorem (see e.g. \cite[Theorem C.6]{cbms}) imply that there exists some contant $0<\tau:=\tau(\alpha,\beta, d, t)<\infty$ such that 
\begin{equation*}
\sup_{z\in \R^d}  \E\left[
		\sup_{\substack{x, y\in Q(z,1)\\
		x\neq y}} \left[\frac{|u_t(x)-u_t(y)|}{|x-y|^{(\alpha-\beta)/4}}\right]^k
		\right]< \tau \exp\left(\tau k^{(2\alpha-\beta)/(\alpha-\beta)} t\right).
\end{equation*} 
Thus, for any $\eta, \epsilon \in (0,1)$, we have
\begin{equation}\label{eq:BD1}\begin{split}
		&\sup_{y\in\R^d}\P\left\{\sup_{x\in Q(y,\epsilon)}|u_t(x)-u_t(y)|>
			\exp\left[(\eta\log s)^{\alpha/(2\alpha-\beta)}\right]\right\}\\
			&\qquad \le \tau \varepsilon^{(\alpha-\beta)k/4}
			\, \exp\left(\tau k^{(2\alpha-\beta)/(\alpha-\beta)} t - k(\eta\log s)^{\alpha/(2\alpha-\beta)}\right).
	\end{split}\end{equation}
Let us choose 
\begin{align*}
\epsilon:=\epsilon(s):=\exp\left(-\lambda_1 (\log s)^{\alpha/(2\alpha-\beta)}\right)\quad,  \qquad k:=k(s):=\lambda_2 (\log s)^{(\alpha-\beta)/(2\alpha-\beta)},
\end{align*}where
\[
\lambda_1:=\frac{8\gamma (\tau t)^{(\alpha-\beta)/\alpha}}{(\alpha-\beta)\eta^{(\alpha-\beta)/(2\alpha-\beta)}} \quad , \qquad \lambda_2:=\frac{\eta^{(\alpha-\beta)/(2\alpha-\beta)}}{(\tau t)^{(\alpha-\beta)/\alpha}}.
\]
In this way, we obtain 
\begin{equation*}
\sup_{y\in\R^d}\P\left\{\sup_{x\in Q(y,\epsilon)}|u_t(x)-u_t(y)|>
			\exp\left[(\eta\log s)^{\alpha/(2\alpha-\beta)}\right]\right\} \leq \tau s^{-2\gamma}
\end{equation*} for all large $s>0$. 

We now choose $\eta:=\min\left\{\frac{1}{2}, \frac{\gamma}{2 \bm{c} t^{(\beta-\alpha)/\alpha}}  \right\}$. Thus, by \eqref{eq:tail:upper}, we get
\begin{equation*}
\sup_{x\in\R^d}\P\left\{u_t(x) \geq \exp\left[\left( \frac{\gamma}{\bm{c}\, t^{(\beta-\alpha)/\alpha}} \log s \right)^{\alpha/(2\alpha-\beta)} \right]- \exp\left[\left(\eta \log s \right)^{\alpha/(2\alpha-\beta)}\right] \right\} \leq s^{-\gamma + o(1)}.
\end{equation*}
Finally, since we have
\[\lim_{s\to \infty} \frac{\log\left[1+\frac{1}{\epsilon(s)}\right]}{\log s}=0,\] 
we obtain
\begin{equation*}
\begin{aligned}
&\sup_{y\in\R^d} \P\left\{\sup_{x\in Q(y,1)} \log u_t(x) > \left( \frac{\gamma}{\bm{c}\, t^{(\beta-\alpha)/\alpha}} \log s \right)^{\alpha/(2\alpha-\beta)} \right\}\\
&=\sup_{y\in\R^d} \P\left\{\sup_{x\in Q(y,1)} u_t(x) > \exp\left[\left( \frac{\gamma}{\bm{c}\, t^{(\beta-\alpha)/\alpha}} \log s \right)^{\alpha/(2\alpha-\beta)}\right] \right\}\\
&\leq \left(1+\frac{1}{\epsilon(s)}\right)^d \sup_{y\in\R^d}\P\left\{\sup_{x\in Q(y,\epsilon)} u_t(x) > \exp\left[\left( \frac{\gamma}{\bm{c}\, t^{(\beta-\alpha)/\alpha}} \log s \right)^{\alpha/(2\alpha-\beta)}\right] \right\}\\
&\leq \left(1+\frac{1}{\epsilon(s)}\right)^d\sup_{y\in\R^d} \P\left\{u_t(y) \geq \exp\left[ \left( \frac{\gamma}{\bm{c}\, t^{(\beta-\alpha)/\alpha}} \log s \right)^{\alpha/(2\alpha-\beta)}\right] - \exp\left[\left(\eta \log s \right)^{\alpha/(2\alpha-\beta)} \right]\right\} \\
&\qquad + \left(1+\frac{1}{\epsilon(s)}\right)^d\sup_{y\in\R^d}\P\left\{\sup_{x\in Q(y,\epsilon)}|u_t(x)-u_t(y)|>\exp\left[(\eta\log s)^{\alpha/(2\alpha-\beta)}\right]\right\}\\
&\leq s^{-\gamma+o(1)}		
\end{aligned}
\end{equation*} as $s\to \infty$. This verifies \eqref{cond:UB}.
\end{proof}

\subsection{The lower bound in Theorem \ref{th:pam}}\label{sec:lower:pam}
We now consider the lower bound in Theorem \ref{th:pam}. Note that, differently from the proof of the lower bound in Theorem \ref{th:linear} where we used the properties of Gaussian random variables, we verify the condition \eqref{cond:LB} by first constructing a coupling process close to the solution $u_t(x)$. To do this, we use the coupling of noise and localization argument developed in \cite{CJKS}.

Let $\eta$ be space-time white noise on $\R_+\times \R^d$ and $h(x):=\left(c_{\beta,d}\right)^{1/2}\|x\|^{-(d+\beta)/2}$ for $x\in\R^d$, where $c_{\beta,d}$ is defined in  \eqref{eq:Riesz}. We note that $(h \ast h) (x)=f(x)$ so that $|\hat h|^2=\hat f$.  Here,  the convolution and the Fourier transform can be understood as the convolution and Fourier transform of generalized functions. We define, for every $\phi \in \mathcal{S}$ (the Schwartz space, i.e., the space of all test functions of rapid decrease), 
\begin{equation} \label{eq:coupling}
 F_t^{(h)}(\phi):=\int_{(0,t)\times \R^d} (\phi \ast h)(x) \, \eta(\d s, \d x),
 \end{equation}where the stochastic integral can be understood in the sense of Walsh or Dalang (\cite{Walsh, Dalang}). By the Fourier and inverse Fourier transforms, it is easy to see that $\{F_t^{(h)}(\phi)\}_{\phi\in\mathcal{S}}$ is a centered Gaussian field whose covariance is given by 
 \begin{align*}
 \Cov\left(F_t^{(h)}(\phi), F_t^{(h)}(\psi)\right)&=t\int_{\R^d} (\phi\ast h)(x) (\psi \ast h)(x)\,  \d x \\
& =t\int_{\R^d} \hat{\phi}(\xi) \Bar{\hat{\psi}}(\xi) |\hat h(\xi)|^2\, \d \xi\\
&=t\int_{\R^d} \int_{\R^d} \phi(x) \psi(y) f(x-y)\, \d y \d x\\
&=\Cov \left(F_t(\phi), F_t(\psi)\right).
\end{align*}
Thus, we can regard $F$ as $F^{(h)}$ and vice versa (see Section 3.1 and 3.2 in \cite{CJKS} for a general construction of noise coupling for functions in $W^{1,2}_{loc}(\R^d)$ where $W^{1,2}_{loc}(\R^d)$ denotes the vector space of all locally integrable functions $g: \R^d \to  \R$ whose Fourier transform is a function that   satisfies  $\int_{\|x\|\leq r} \|x\|^2 |\hat{g} (x)|^2 \, \d x <\infty$). In particular, we can regard $u_t(x)$  as 
\begin{align}
u_t(x)&=1+\int_{(0,t)\times \R^d} p_{t-s}(y-x) u_s(y) F^{(h)}(\d s\, \d y)\\
&= 1+\int_{(0,t)\times \R^d} \left(\int_{\R^d} p_{t-s}(y-x) u_s(y) h(y-z) \, \d y \right) \, \eta(\d s\, \d z).\label{eq:zt}
\end{align} 
We now use a truncation to produce independence in space. For every $x:=(x_1,\dots, x_d)\in \R^d$, $t>0$, and $\ell > 0$, we define
 \begin{equation*} 
h_\ell(x):=h(x) \mathbb{1}_{\{\|x\| \leq \ell\}} (x),
 \end{equation*} 
 and
\begin{equation*} 
	\mathcal{I}_t(x;\ell):=
 \left[x_1-\ell t^{1/\alpha}\,, x_1+\ell t^{1/\alpha}\right]\times \cdots \times 
	\left[x_d-\ell t^{1/\alpha}\,, x_d+\ell t^{1/\alpha}\right].
\end{equation*}

Since $h_\ell  \in W^{1,2}_{loc}(\R^d)$,  replacing $h$ by $h_\ell$ in \eqref{eq:coupling} , we can also construct $F^{(h_\ell)}$. Using $F^{(h_\ell)}$, we may obtain a random field $u_t^{(\ell)}(x)$ which satisfies the following integral equation: 
 \begin{align}\label{eq:ul}
 \ul_t(x)&=1+ \int_{(0,t)\times \mathcal{I}_t(x;\ell)} p_{t-s}(y-x) \ul_s(y) \, F^{(h_\ell)}(\d s\, \d y)\\
 &=\int_{(0,t)\times \R^d} \left(\int_{\mathcal{I}_t(x;\ell)} p_{t-s}(y-x) \ul_s(y) h_\ell(y-z) \, \d y \right) \, \eta(\d s\, \d z).\label{eq:ztl}
\end{align}
The existence and uniqueness of $u_t^{(\ell)}(x)$ can be obtained from a Picard iteration, i.e., define iteratively,
\begin{align}
&u^{(\ell,0)}_t(x):=1,\\
&u^{(\ell, m)}_t(x):=1+\int_{(0,t)\times  \mathcal{I}_t(x;\ell)} p_{t-s}(y-x) u_s^{(\ell, m-1)} \, F^{(h_\ell)}(\d s\, \d y), \quad m\geq 1.
\end{align}
The following lemma says that there is a unique $\{\ul_t(x)\}_{t>0, x\in\R^d}$ which satisfies \eqref{eq:ul}.

\begin{lemma}\label{lem:picard}
Fix $t>0$. There exists a finite constant $C:=C(\alpha,\beta,t, d)>0$ such that for all $k\geq 2$
\begin{equation}
\sup_{x\in\R^d} \E\left| u_t^{(\ell, m+1)}(x) -u_t^{(\ell,m)}(x) \right|^k \leq C^k\,  2^{-m k/2} \,\e^{c^*k^{(2\alpha-\beta)/(\alpha-\beta)} t},
\end{equation}where $c^*$ is the constant given in Lemma \ref{lem:moment}
\end{lemma}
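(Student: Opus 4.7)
The plan is to set this up as a standard Picard contraction estimate in the weighted norm $\mathcal{N}_{\gamma,k}$ of \eqref{norm}, mirroring the proof of Lemma \ref{lem:moment} but with the difference $D^{(m)}_t(x) := u^{(\ell,m+1)}_t(x) - u^{(\ell,m)}_t(x)$ in place of $u_t(x)$.

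First I would write
\begin{equation*}
D^{(m)}_t(x) = \int_{(0,t)\times\mathcal{I}_t(x;\ell)} p_{t-s}(y-x)\bigl[u_s^{(\ell,m)}(y) - u_s^{(\ell,m-1)}(y)\bigr]\, F^{(h_\ell)}(\d s\,\d y),
\end{equation*}
apply the Burkholder--Davis--Gundy inequality, then Minkowski's integral inequality and Cauchy--Schwarz exactly as in \eqref{eq:BDG}. The covariance of $F^{(h_\ell)}$ produces the kernel $(h_\ell * h_\ell)(y-y')$, and since $0 \le h_\ell \le h$ pointwise we have $(h_\ell * h_\ell)(y-y') \le (h*h)(y-y') = f(y-y')$. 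Enlarging the spatial integrals from $\mathcal{I}_t(x;\ell)$ to all of $\R^d$ and proceeding through the Fourier computation of \eqref{eq:BDG1} gives
\begin{equation*}
\sup_{x\in\R^d}\|D^{(m)}_t(x)\|_k^2 \le 2^{(2-\beta)/\alpha}\,\Gamma(\beta/\alpha)\,k \int_0^t \frac{\sup_y\|D^{(m-1)}_s(y)\|_k^2}{(t-s)^{\beta/\alpha}}\,\d s.
\end{equation*}

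Next I would apply the $\mathcal{N}_{\gamma,k}$ norm to both sides and integrate, obtaining the contraction
$\mathcal{N}_{\gamma,k}(D^{(m)}) \le \bar c^* k\, \gamma^{(\beta-\alpha)/\alpha}\, \mathcal{N}_{\gamma,k}(D^{(m-1)})$ with the same constant $\bar c^*$ as in Lemma \ref{lem:moment}. Choosing $\gamma = (2\bar c^* k)^{\alpha/(\alpha-\beta)}$ makes the multiplier equal to $1/2$, so iteration yields
\begin{equation*}
\mathcal{N}_{\gamma,k}(D^{(m)}) \le 2^{-m}\, \mathcal{N}_{\gamma,k}(D^{(0)}).
\end{equation*}
For the base case, $D^{(0)}_t(x) = u^{(\ell,1)}_t(x) - 1$ is a single stochastic integral with the constant integrand $1$, so the same BDG/Fourier calculation gives $\|D^{(0)}_t(x)\|_k^2 \le \mathrm{const}\cdot k\, t^{(\alpha-\beta)/\alpha}$. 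Maximizing $\e^{-\gamma t}\, k\, t^{(\alpha-\beta)/\alpha}$ over $t$ with our choice of $\gamma$ gives $\mathcal{N}_{\gamma,k}(D^{(0)}) \le C_0$ for an absolute constant $C_0$ depending only on $\alpha,\beta,d$.

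Unwinding the norm then gives
\begin{equation*}
\sup_{x}\|D^{(m)}_t(x)\|_k^2 \le C_0\, 2^{-m}\, \e^{(2\bar c^* k)^{\alpha/(\alpha-\beta)} t},
\end{equation*}
and raising to the $k/2$-power produces the stated bound since $k \cdot k^{\alpha/(\alpha-\beta)} = k^{(2\alpha-\beta)/(\alpha-\beta)}$, identifying $c^*$ with $(2\bar c^*)^{\alpha/(\alpha-\beta)}/2$ (consistent with the constant in Lemma \ref{lem:moment}). The only part that I expect to require a little care is the base step $m=0$ and the justification that $(h_\ell * h_\ell) \le f$ pointwise; the rest is a mechanical repetition of the argument already developed in Lemma \ref{lem:moment}. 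A minor side issue is that one must verify inductively that each $u^{(\ell,m)}_t(x)$ lies in $L^k$ so that Minkowski applies, but this follows from the very same recursion applied to $\|u^{(\ell,m)}_t(x)\|_k$ rather than to the difference.
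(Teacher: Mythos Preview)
Your proposal is correct and follows essentially the same route as the paper: apply BDG/Minkowski/Cauchy--Schwarz as in \eqref{eq:BDG}--\eqref{eq:BDG1} to get the recursion, pass to the $\mathcal{N}_{\gamma,k}$ norm with $\gamma=(2\bar c^*k)^{\alpha/(\alpha-\beta)}$ to make the contraction factor $1/2$, and control the base case by the Gaussianity of $D^{(0)}$. You are in fact slightly more careful than the paper, which writes $u_t^{(\ell,1)}-u_t^{(\ell,0)}=Z_t(x)$ without noting the truncation, whereas you correctly dominate via $(h_\ell*h_\ell)\le(h*h)=f$.
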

\begin{proof}
The proof of this lemma is very similar to the proof for the upper bound in Lemma \ref{lem:moment}. Thus, we may skip some steps. As in \eqref{eq:BDG} and \eqref{eq:BDG1}, we obtain
\begin{align}
\left\|u_t^{(\ell, m+1)}(x)-u_t^{(\ell,m)}(x)\right\|_k^2 \leq 2^{(2-\beta)/\alpha}\, \Gamma(\beta/\alpha) k \int_0^t (t-s)^{-\beta/\alpha} \sup_{x\in \R^d} \left\|u_t^{(\ell, m)}(x)-u_t^{(\ell,m-1)}(x)\right\|_k^2 \, \d s .
\end{align}
We also use the norm defined in \eqref{norm} and the same $\gamma$ given right after \eqref{norm} to obtain
\begin{equation}
\mathcal{N}_{\gamma, k}\left( u_t^{(\ell, m+1)}(x)-u_t^{(\ell,m)}(x)\right) \leq \frac{1}{2} \mathcal{N}_{\gamma, k}\left( u_t^{(\ell, m)}(x)-u_t^{(\ell,m-1)}(x)\right).
\end{equation}
Since $u_t^{(\ell, 1)}(x)-u_t^{(\ell,0)}(x)=Z_t(x)$ and $Z_t(x)$ is centered Gaussian with variance $\cab t^{(\alpha-\beta)/\alpha}$ by Lemma \ref{lem:gaussian}, we complete the proof. 
\end{proof}

Following the proof of Lemma \ref{lem:picard}, we can also have that 
\begin{equation}
\sup_{x\in\R^d} \E\left| u_t^{(\ell)}(x) -u_t^{(\ell,m)}(x) \right|^k \leq C^k\,  2^{-m k/2} \,\e^{c^*k^{(2\alpha-\beta)/(\alpha-\beta)} t}.
\end{equation}

\begin{lemma}\label{lem:indep:she}
Fix $t>0$, $\ell>1$ and $m\geq 1$. If $\|x-y\|_\infty \geq 2m(\ell t^{1/\alpha}+\ell)$ for $x, y\in\R^d$, then $u_t^{(\ell,m)}(x)$ and $u_t^{(\ell,m)}(y)$ are independent.   
\end{lemma}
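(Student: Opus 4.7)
The plan is to track, by induction on $m$, the spatial region of white noise $\eta$ on which the Picard iterate $u_t^{(\ell,m)}(x)$ depends, and then to invoke independence of white noise on disjoint sets.

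Concretely, for each $m\ge 0$ and $x\in\R^d$ I would associate to $u_t^{(\ell,m)}(x)$ a (closed) \emph{domain of dependence} $D_m(x)\subset\R^d$ such that $u_t^{(\ell,m)}(x)$ is measurable with respect to $\sigma(\eta|_{(0,t)\times D_m(x)})$. For $m=0$ the iterate is the constant $1$, so $D_0(x)=\varnothing$. For the inductive step, I rewrite the iteration in the form
\[
u_t^{(\ell,m)}(x)=1+\int_{(0,t)\times\R^d}\Big(\int_{\mathcal{I}_t(x;\ell)}p_{t-s}(y-x)\,u_s^{(\ell,m-1)}(y)\,h_\ell(y-z)\,\d y\Big)\eta(\d s\,\d z).
\]
The inner integrand in $z$ vanishes unless there exists $y\in\mathcal{I}_t(x;\ell)$ with $\|y-z\|\le\ell$, i.e.\ unless $z$ lies in the Minkowski sum of the box $\mathcal{I}_t(x;\ell)$ (half-side $\ell t^{1/\alpha}$ in $\|\cdot\|_\infty$) and the Euclidean ball of radius $\ell$; this forces $z\in B_\infty(x,\ell t^{1/\alpha}+\ell)$. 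In addition, the noise enters through $u_s^{(\ell,m-1)}(y)$ for $y\in\mathcal{I}_t(x;\ell)$, which by the inductive hypothesis is measurable with respect to $\eta$ restricted to $(0,s)\times D_{m-1}(y)$.

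These two observations yield the recursion
\[
D_m(x)\;\subseteq\;B_\infty(x,\ell t^{1/\alpha}+\ell)\;\cup\;\bigcup_{y\in\mathcal{I}_t(x;\ell)}D_{m-1}(y).
\]
Assuming inductively that $D_{m-1}(y)\subseteq B_\infty(y,(m-1)(\ell t^{1/\alpha}+\ell))$ for all $y$, the triangle inequality in $\|\cdot\|_\infty$ gives $D_{m-1}(y)\subseteq B_\infty(x,m(\ell t^{1/\alpha}+\ell))$ for every $y\in\mathcal{I}_t(x;\ell)$, and hence
\[
D_m(x)\;\subseteq\;B_\infty\bigl(x,\,m(\ell t^{1/\alpha}+\ell)\bigr).
\]

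Finally, if $\|x-y\|_\infty\ge 2m(\ell t^{1/\alpha}+\ell)$, then $D_m(x)\cap D_m(y)=\varnothing$, so the restrictions $\eta|_{(0,t)\times D_m(x)}$ and $\eta|_{(0,t)\times D_m(y)}$ are independent Gaussian noises (white noise on disjoint Borel sets is independent), and therefore $u_t^{(\ell,m)}(x)$ and $u_t^{(\ell,m)}(y)$ are independent. The only subtle point, and the one place some care is needed, is the first step in making the informal ``domain of dependence'' rigorous: one must formalize $D_m(x)$ via the explicit Walsh representation of each Picard iterate and verify measurability with respect to the restricted noise $\sigma$-algebra before the induction can be run cleanly. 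Once that bookkeeping is set up, the geometric computation above is essentially all that remains.
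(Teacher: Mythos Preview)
Your proposal is correct and is precisely the localization argument of Conus--Joseph--Khoshnevisan--Shiu \cite{CJKS} to which the paper defers (the paper gives no proof of its own, citing Lemma~9.8 there verbatim). The only minor bookkeeping point you already flag---that the inductive hypothesis must be stated uniformly in the time variable $s\in(0,t]$ so that $u_s^{(\ell,m-1)}(y)$ depends on noise in $B_\infty(y,(m-1)(\ell s^{1/\alpha}+\ell))\subseteq B_\infty(y,(m-1)(\ell t^{1/\alpha}+\ell))$---is handled exactly as in \cite{CJKS}.
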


We will skip the proof of Lemma \ref{lem:indep:she} since the proof is exactly the same as the one for Lemma 9.8 in \cite{CJKS} where the authors consider  \eqref{eq:pam} when $\alpha=2$. There is no difference between $\alpha=2$ and $0<\alpha<2$ in the proof of Lemma 9.8 in \cite{CJKS}.

\begin{lemma}\label{lem:u-ul}
Fix $t>0$ and $\ell >1$. There exists a finite constant $C:=C(\alpha,\beta, t, d)$ such that 
\begin{equation}
\sup_{x\in\R^d} \E \left| u_t(x)-u_t^{(\ell)}(x) \right|^k \leq C^k\, k^{k/2}  \ell^{-\beta k /2} \, \e^{c^* tk^{(2\alpha-\beta)/(\alpha-\beta)} },
\end{equation}where $c^*$ is the constant given in Lemma \ref{lem:moment}
\end{lemma}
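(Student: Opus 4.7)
The plan is to write $u_t(x) - u_t^{(\ell)}(x)$ as a single stochastic integral against the driving white noise $\eta$ and then to extract the gain $\ell^{-\beta/2}$ from the two sources of discrepancy: the truncation of the spatial integration to $\mathcal{I}_t(x;\ell)$ and the truncation of the correlation kernel $h$ to $h_\ell$. Using \eqref{eq:zt} and \eqref{eq:ztl}, we have $u_t(x) - u_t^{(\ell)}(x) = \int_{(0,t)\times \R^d} G(s,z)\, \eta(\d s\, \d z)$, where after inserting the hybrid terms $\int_{\mathcal{I}_t(x;\ell)} p_{t-s}(y-x) u_s(y) h(y-z)\,\d y$ and $\int_{\mathcal{I}_t(x;\ell)} p_{t-s}(y-x) u_s(y) h_\ell(y-z)\,\d y$, the integrand splits as $G = G_1 + G_2 + G_3$ with
\begin{align*}
G_1(s,z) &:= \int_{\R^d \setminus \mathcal{I}_t(x;\ell)} p_{t-s}(y-x)\, u_s(y)\, h(y-z)\, \d y, \\
G_2(s,z) &:= \int_{\mathcal{I}_t(x;\ell)} p_{t-s}(y-x)\, u_s(y)\, (h - h_\ell)(y-z)\, \d y, \\
G_3(s,z) &:= \int_{\mathcal{I}_t(x;\ell)} p_{t-s}(y-x)\, (u_s(y) - u_s^{(\ell)}(y))\, h_\ell(y-z)\, \d y.
\end{align*}
BDG together with Minkowski's integral inequality, exactly as in the derivation of \eqref{eq:BDG1}, then gives $\|u_t(x) - u_t^{(\ell)}(x)\|_k^2 \le 4k \sum_{i=1}^3 \int_0^t \int_{\R^d} \|G_i(s,z)\|_k^2\, \d z\, \d s$.

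For the two truncation terms $G_1$ and $G_2$, I would pull out $\|u_s(y)\|_k$ by Minkowski and then replace it by $2 \exp(c^* t\, k^{\alpha/(\alpha-\beta)})$ using \eqref{eq:upper-moment}. After squaring and integrating in $z$, Parseval's identity (together with $h \ast h = f$, valid because $h$ is even) collapses the $z$-integral to a double integral of $p_{t-s}(y-x) p_{t-s}(y'-x)$ against a kernel in $y - y'$. For $G_2$ the key identity is
\begin{equation*}
\bigl|((h-h_\ell) \ast (h-h_\ell))(w)\bigr| \le \|h - h_\ell\|_{L^2(\R^d)}^2 = c_{\beta,d} \int_{\|x\| > \ell} \|x\|^{-(d+\beta)}\, \d x = C\, \ell^{-\beta},
\end{equation*}
which is uniform in $w$, so the corresponding contribution is bounded by $C\, \ell^{-\beta} \exp(2 c^* t\, k^{\alpha/(\alpha-\beta)})$ (using $\int p_{t-s} = 1$ in the $y,y'$-integrals). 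For $G_1$, the restriction $\|y-x\|_\infty > \ell t^{1/\alpha}$ combined with the heat-kernel tail $p_{t-s}(u) \le c_2 (t-s)/\|u\|^{d+\alpha}$ from \eqref{eq:density} yields an even stronger power of $\ell$ after a direct calculation against $f$.

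For the Picard term $G_3$, Minkowski together with $f_\ell := h_\ell \ast h_\ell \le f$ and Parseval yields
\begin{equation*}
\int_{\R^d} \|G_3(s,z)\|_k^2\, \d z \le \text{const}\cdot (t-s)^{-\beta/\alpha} \sup_{y \in \R^d} \|u_s(y) - u_s^{(\ell)}(y)\|_k^2,
\end{equation*}
exactly as in the Picard scheme in Lemma \ref{lem:picard}. Assembling the three estimates and passing to the weighted norm $\mathcal{N}_{\gamma,k}$ of \eqref{norm} with $\gamma := (2 \bar c^* k)^{\alpha/(\alpha - \beta)}$ (the same choice as in the proof of Lemma \ref{lem:moment}), the $G_3$ contribution becomes $\tfrac{1}{2}\mathcal{N}_{\gamma,k}(u - u^{(\ell)})$ and can be absorbed on the left, while $G_1 + G_2$ together contribute $C'\, k\, \ell^{-\beta}\, \exp(2 c^* t\, k^{\alpha/(\alpha-\beta)})$. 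Rearranging, evaluating at time $t$, and raising to the $k/2$ power gives the claimed bound $C^k k^{k/2} \ell^{-\beta k/2} \exp(c^* t\, k^{(2\alpha-\beta)/(\alpha-\beta)})$.

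The main obstacle will be the estimate for $G_2$: since $h$ fails to be in $L^1(\R^d)$ when $\beta < d$, one cannot rely on an $L^1$ Fourier bound for $\widehat{h - h_\ell}$, and the whole argument hinges on the Cauchy--Schwarz estimate above, which converts the $L^2$ decay $\|h - h_\ell\|_{L^2}^2 = O(\ell^{-\beta})$ into a \emph{uniform pointwise} bound on the effective covariance $(h-h_\ell) \ast (h-h_\ell)$. This is precisely the power of $\ell$ that the lemma demands; once this is in hand, the remainder of the proof mimics the Picard and $\mathcal{N}_{\gamma,k}$-based iteration of Lemmas \ref{lem:moment} and \ref{lem:picard}.
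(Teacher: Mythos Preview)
Your proposal is correct and follows essentially the same route as the paper. Your decomposition $G_1+G_2+G_3$ is exactly the paper's $\mathcal{E}_1+\mathcal{E}_2+\mathcal{E}_3$ (just written against the white noise $\eta$ rather than against $F$ and $F^{(h_\ell)}$), your Cauchy--Schwarz bound $|(h-h_\ell)\ast(h-h_\ell)|\le\|h-h_\ell\|_{L^2}^2=O(\ell^{-\beta})$ is precisely the step the paper uses for $\mathcal{E}_2$, and the closure via $\mathcal{N}_{\gamma,k}$ with the same $\gamma$ as in Lemma~\ref{lem:moment} to absorb the $G_3$ term matches the paper exactly.
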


\begin{proof}
We may write
\begin{equation}
u_t(x)-u_t^{(\ell)}(x)=\mathcal{E}_1(t,x)+\mathcal{E}_2(x)+\mathcal{E}_3(x),
\end{equation}where
\begin{align*}
\mathcal{E}_1(t,x)&:=\int_{(0,t)\times\R^d} p_{t-s}(y-x)u_s(y)\mathbb{1}_{\mathcal{I}^c_t(x;\ell)}(y) \, F(\d s\, \d y),\\
\mathcal{E}_2(t,x)&:=\int_{(0,t)\times \mathcal{I}_t(x;\ell)} p_{t-s}(y-x)u_s(y) F(\d s\, \d y) - \int_{(0,t)\times \mathcal{I}_t(x;\ell)} p_{t-s}(y-x)u_s(y) F^{(h_\ell)} (\d s\, \d y),\\
\mathcal{E}_3(t,x)&:=\int_{(0,t)\times \mathcal{I}_t(x;\ell)} p_{t-s}(y-x) \left(u_s(y)-u_s^{(\ell)}(y) \right) F^{(h_\ell)}(\d s \, \d y).
\end{align*}
Let us first consider $\mathcal{E}_1$: As in the proof of Lemma \ref{lem:moment} (see \eqref{eq:BDG} and \eqref{eq:BDG1}), and by also \eqref{eq:upper-moment}, we obtain
\begin{align*}
\|\mathcal{E}_1(t,x)\|_k^2 &\leq 4k \int_0^t \sup_{y\in\R^d} \|u_s(y)\|_k^2 \int_{\mathcal{I}^c_t(x;\ell)} \int_{\mathcal{I}^c_t(x;\ell)} p_{t-s}(y-x)p_{t-s}(y'-x) f(y-y') \d y \d y' \d s\\
&\leq 16k\, \e^{c^* t \, k^{\alpha/(\alpha-\beta)}} \int_0^t   \int_{\mathcal{I}^c_t(x;\ell)} \int_{\mathcal{I}^c_t(x;\ell)} p_{t-s}(y-x)p_{t-s}(y'-x) f(y-y') \d y \d y' \d s.
\end{align*}
Since $\int_{\R^d} p_{t-s}(y'-x)f(y-y') \d y' \leq C (t-s)^{-\beta/\alpha}$ for some finite constant $C:=C(\alpha,\beta)>0$ by the Plancherel theorem, we have
\begin{equation}\label{E1}
\begin{aligned}
\|\mathcal{E}_1(t,x)\|_k^2 &\leq 16 C k\, \e^{2c^* t \, k^{\alpha/(\alpha-\beta)}} \int_0^t (t-s)^{-\beta/\alpha} \int_{\mathcal{I}^c_t(x;\ell)} p_{t-s}(y-x) \d y \d s\\
&\leq 16 C k\, \e^{2c^* t \, k^{\alpha/(\alpha-\beta)}}  \int_0^t (t-s)^{1-\beta/\alpha} \int_{\mathcal{I}^c_t(x;\ell)} \|y-x\|^{-(d+\alpha)} \d y \d s\\
&\leq C(t,\alpha,\beta)\, k\, \ell^{-\alpha} \,  \e^{2c^* t \, k^{\alpha/(\alpha-\beta)}} .
\end{aligned}
\end{equation}
We now consider $\mathcal{E}_2$.  Let $\tilde h_\ell(x):=h(x)-h_\ell(x)=c_{\beta,d}\|x\|^{-\beta}\mathbb{1}_{\{\|x\|\geq \ell \}}(x)$ for all $x \in \R^d$.   Then, we have
\begin{align*}
&\int_0^t\int_{\R^d} \left|\int_{\R^d} p_s(y-x) \tilde h_\ell (y-z) \d y \right|^2 \d z \d s\\
&= \int_0^t \int_{\R^d} p_s(y) \int_{\R^d} p_s(y') \int_{\R^d} \tilde h_\ell (y-z) \tilde h_\ell(y'-z)\,  \d z \d y \d y' \d s \\
&\leq \int_0^t \left(\int_{\R^d} p_s(y) \, \d y\right)^2 \int_{\R^d} \tilde h_\ell^2(z) \d z \d s \\
&\leq t\beta^{-1} \ell^{-\beta}, 
\end{align*}
Therefore, by the Burkholder-Davis-Gundy inequality, and then Minkowskii's inequality and Cauchy-Schwartz inequality, we have 
\begin{equation}\label{E2}
\begin{aligned}
\left\|\mathcal{E}_2(t,x)\right\|_k^2 &\leq 4k \left\| \int_0^t \int_{\R^d} \left| \int_{\mathcal{I}_t(x;\ell)} p_{t-s}(y-x) u_s(y) \tilde h_\ell (y-z) \d y \right|^2 \d z \d s  \right\|_{k/2}\\
&\leq 4k \int_0^t \sup_{x\in\R^d} \|u_s(x)\|_k^2\int_{\R^d} \left|\int_{\R^d} p_{t-s}(y-x) \tilde h_\ell (y-z) \d y \right|^2 \d z \d s\\
&\leq 16k \e^{2c^* t \, k^{\alpha/(\alpha-\beta)}}  \int_0^t \int_{\R^d} \left|\int_{\R^d} p_s(y-x) \tilde h_\ell (y-z) \d y \right|^2 \d z \d s\\
&\leq 16k t \beta^{-1} \, \ell^{-\beta}\, \e^{2c^* t \, k^{\alpha/(\alpha-\beta)}}.
\end{aligned}
\end{equation}
Regarding $\mathcal{E}_3$, as in the proof of Lemma \ref{lem:moment} (see \eqref{eq:BDG} and \eqref{eq:BDG1}), we obtain
\begin{equation}\label{E3}
\left\| \mathcal{E}_3(t,x)\right\|_k^2 \leq 2^{(2-\beta)/\alpha}\, \Gamma(\beta/\alpha) k \int_0^t \frac{\sup_{x} \left\|u_s(x)-u_s^{(\ell)}(x)\right\|_k^2}{(t-s)^{\beta/\alpha}} \, \d s.
\end{equation}
Thus, by \eqref{E1}, \eqref{E2} and \eqref{E3}, and $\ell>1, \alpha>\beta$,  we get that, for some constant $C:=C(\alpha,\beta, t)$,
\begin{equation}
\sup_{x\in\R^d} \left\| u_t(x)-u_t^{(\ell)}(x) \right\|_k^2 \leq C\, k\, \e^{c^*\, t\,  k^{\alpha/(\alpha-\beta)}} + 2^{(2-\beta)/\alpha}\, \Gamma(\beta/\alpha) k \int_0^t \frac{\sup_{x} \left\|u_s(x)-u_s^{(\ell)}(x)\right\|_k^2}{(t-s)^{\beta/\alpha}} \, \d s
\end{equation}
Using then the norm $\mathcal{N}_{\gamma,k}$ defined in \eqref{norm} with the same $\gamma:=c^* k^{\alpha/(\alpha-\beta)}$, we obtain
\begin{equation}
\mathcal{N}_{\gamma, k}\left(u-u^{(\ell)} \right) \leq 2C k.
\end{equation}
This completes the proof by relabeling $2C$. 
\end{proof}

By Lemmas \ref{lem:picard} and \ref{lem:u-ul}, and the Chebychev  inequality, we get the following:
\begin{lemma}\label{lem:u-ulm}
Fix $t>0$ and $\lambda\geq 1$. We have
\begin{equation}\label{eq:condLB}
\sup_{x\in\R^d} \P\left\{\left| u_t(x)- u_t^{(\ell,m)}(x)\right| \geq \lambda \right\} \leq C^k\, k^{k/2}\, \left[ \ell^{-\beta k/2}+2^{-mk/2} \right] \exp\left(-k\log\lambda + c^*\, t\,  k^{(2\alpha-\beta)/(\alpha-\beta)}\right). 
\end{equation}
\end{lemma}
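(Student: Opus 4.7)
The plan is to combine Lemma \ref{lem:picard} (in its version for $u_t^{(\ell)}$, which is stated immediately after its proof) with Lemma \ref{lem:u-ul} by means of a triangle inequality, then convert the resulting $L^k$-bound into a tail bound via Chebychev's inequality. Concretely, I would write
\begin{equation*}
u_t(x)-u_t^{(\ell,m)}(x)=\bigl(u_t(x)-u_t^{(\ell)}(x)\bigr)+\bigl(u_t^{(\ell)}(x)-u_t^{(\ell,m)}(x)\bigr),
\end{equation*}
and apply Minkowski's inequality in $L^k$:
\begin{equation*}
\bigl\|u_t(x)-u_t^{(\ell,m)}(x)\bigr\|_k\le \bigl\|u_t(x)-u_t^{(\ell)}(x)\bigr\|_k+\bigl\|u_t^{(\ell)}(x)-u_t^{(\ell,m)}(x)\bigr\|_k.
\end{equation*}

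Next, I would take the $k$-th root of the two displayed bounds to get
\begin{equation*}
\bigl\|u_t(x)-u_t^{(\ell)}(x)\bigr\|_k \le C\,k^{1/2}\,\ell^{-\beta/2}\,\exp\!\bigl(c^*\,t\,k^{(2\alpha-\beta)/(\alpha-\beta)}/k\bigr),
\end{equation*}
and
\begin{equation*}
\bigl\|u_t^{(\ell)}(x)-u_t^{(\ell,m)}(x)\bigr\|_k \le C\,2^{-m/2}\,\exp\!\bigl(c^*\,t\,k^{(2\alpha-\beta)/(\alpha-\beta)}/k\bigr),
\end{equation*}
where I use the $L^k$ bound on $\|u_t^{(\ell)}-u_t^{(\ell,m)}\|_k$ that follows the proof of Lemma \ref{lem:picard}. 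Adding these, raising to the $k$-th power, and using $(a+b)^k\le 2^k(a^k+b^k)$ (with the factor $2^k$ absorbed into the implicit constant $C^k$), I obtain
\begin{equation*}
\E\bigl|u_t(x)-u_t^{(\ell,m)}(x)\bigr|^k\le C^k\,k^{k/2}\,\bigl[\ell^{-\beta k/2}+2^{-mk/2}\bigr]\,\exp\!\bigl(c^*\,t\,k^{(2\alpha-\beta)/(\alpha-\beta)}\bigr),
\end{equation*}
uniformly in $x\in\R^d$.

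Finally, Chebychev's inequality gives
\begin{equation*}
\P\bigl\{|u_t(x)-u_t^{(\ell,m)}(x)|\ge\lambda\bigr\}\le \lambda^{-k}\,\E\bigl|u_t(x)-u_t^{(\ell,m)}(x)\bigr|^k,
\end{equation*}
and rewriting $\lambda^{-k}=\exp(-k\log\lambda)$ yields the claimed bound. Since Lemmas \ref{lem:picard} and \ref{lem:u-ul} do the heavy lifting, there is no real obstacle here; the only point to watch is that the combinatorial constant coming from $(a+b)^k\le 2^k(a^k+b^k)$ is absorbed cleanly into the generic constant $C$, which is legitimate because $C^k$ already appears on the right-hand side. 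The bound holds for every $k\ge 2$, so one may optimize in $k$ later (as will be needed in the lower-bound argument of Section \ref{sec:lower:pam}).
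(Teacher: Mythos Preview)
Your proposal is correct and matches the paper's approach exactly: the paper simply states that the lemma follows from Lemmas \ref{lem:picard} and \ref{lem:u-ul} together with Chebychev's inequality, and you have spelled out precisely those details (triangle/Minkowski, then $(a+b)^k\le 2^k(a^k+b^k)$, then Chebychev). The only cosmetic point is that the bound coming from Lemma \ref{lem:picard} carries no $k^{k/2}$ factor, but since $k\ge 2$ implies $k^{k/2}\ge 1$, this is harmlessly absorbed into the common prefactor as you indicate.
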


We can now prove the lower bound in Theorem \ref{th:pam}. 
\begin{proof}[Proof of the lower bound in Theorem \ref{th:pam}]
We will verify the condition \eqref{cond:LB}. 

Let $X(x):=\log u_t(x)$, $S(x):=\exp(x)$ and $\delta \in (0,1)$. For any $n\geq 1$, we  choose
\begin{align}
\ell:=\ell_n:=\exp\left(n^{(3\alpha-\beta)/(4\alpha-2\beta)} \right)\quad, \quad m:=m_n:=\lfloor{1+\beta\log_2 \ell}\rfloor.
\end{align}
Since $\alpha>\beta>0$ by the assumption \eqref{condition}, for any $x, y \in \Pi_n (\delta)$ with $x\neq y$,
\begin{equation}
 \|x-y\|\geq \exp(\delta n ) \geq  2m_n\ell_n(1+t^{1/\alpha}) \approx \exp\left(n^{(3\alpha-\beta)/(4\alpha-2\beta)} \right)
 \end{equation} for all large $n\geq 1$. This implies that
 $\left\{u_t^{(\ell_n,m_n)}(x_i)\right\}$'s are independent for $\{x_i\}\in \Pi_n(\delta)$ for all large $n\geq 1$ by Lemma \ref{lem:indep:she}. Thus,  we can  define $Y_i:=u_t^{(\ell_n, m_n)}(x_i)$.
 
 We now choose $k:=k_n:=\left[\frac{\beta}{4c^* t} \log \ell_n   \right]^{(\alpha-\beta)/\alpha}$  and plug in $k=k_n, \lambda=1, \ell=\ell_n$ and $m=m_n$ into \eqref{eq:condLB} to obtain  
\begin{equation} 
\log \P\left\{\left| X(x_i)- Y_i\right| \geq \lambda \right\}= \log\P\left\{\left| u_t(x_i)- u_t^{(\ell_n, m_n)}(x_i)\right| \geq \lambda \right\} \leq - C\, n^{\frac{3\alpha-\beta}{2\alpha}} 
\end{equation} where the constant $C$ is independent of $n$. Since $\alpha>\beta$, which implies that $(3\alpha-\beta)/(2\alpha) > 1$, we verify \eqref{cond:LB}. 
\end{proof}

\section*{Acknowledgments} The author would like to thank Professor Davar Khoshnevisan for his valuable comments and continued support. The author would also like to  thank Professor Kyeonghun Kim for his hospitality and support while the author was visiting Korea University.

\begin{spacing}{1}
\begin{small}
\end{small}\end{spacing}
\vskip.1in

\begin{small}
\noindent\textbf{Kunwoo Kim}\\
\noindent Mathematical Sciences Research Institute\\ 17 Gauss Way, Berkeley, CA 94720\\
\noindent \emph{Email address}: \texttt{kunwookim@msri.org}
\end{small}

\end{document}